\newtheorem{theo+}              {Theorem}           [section]
\newtheorem{prop+}  [theo+]     {Proposition}
\newtheorem{coro+}  [theo+]     {Corollary}
\newtheorem{lemm+}  [theo+]     {Lemma}
\newtheorem{exam+}  [theo+]     {Example}
\newtheorem{rema+}  [theo+]     {Remark}
\newtheorem{defi+}  [theo+]     {Definition}
\newtheorem{clai+}  [theo+]     {Claim}
\newenvironment{theorem}{\begin{theo+}}{\end{theo+}}
\newenvironment{proposition}{\begin{prop+}}{\end{prop+}}
\newenvironment{corollary}{\begin{coro+}}{\end{coro+}}
\newenvironment{lemma}{\begin{lemm+}}{\end{lemm+}}
\theoremstyle{plain} \theoremstyle{remark}
\newtheorem{remark}{Remark}
\def \r{\mbox{${\mathbb R}$}}
\def\E{/\kern-1.0em \equiv }
\title{  Biharmonic CMC surfaces, harmonic and  biharmonic Riemannian surmersions on
  Berger 3-sphere $S^3_\varepsilon$}
\author{ Ze-Ping Wang$^{*}$ and Ye-Lin Ou$^{**}$}
\address{Department of Mathematics,\newline\indent Guizhou
Normal University,\newline\indent Guiyang 550025,\newline\indent
People's Republic of China
\newline\indent E-mail:zpwzpw2012@126.com \;(Wang)
\\\newline\indent  \\\newline\indent
Department of Mathematics,\newline\indent Texas A $\&$ M University-Commerce,
\newline\indent Commerce TX 75429,\newline\indent USA.\newline\indent
E-mail:yelin$\_$ou@tamu-commerce.edu \;(Ou).}
\thanks{*Supported by the Natural Science Foundation of China (No. 11861022). \\
\indent** Supported by a grant from the Simons Foundation ( 427231,
Ye-Lin Ou).}
\begin{document}
\title[Biharmonic isometric immersions and Riemannian submersions ] {Biharmonic isometric immersions into and  biharmonic Riemannian submersions  from Berger 3-spheres}
\date {18/2/2023} \subjclass{58E20, 53C12, 53C42} \keywords{ Biharmonic maps, Biharmonic isometric immersions, constant mean curvature, biharmonic Riemannian submersions , Berger 3-sphere.} \maketitle

\section*{Abstract}
\begin{quote}
{\footnotesize In this paper,  we study  biharmonic isometric immersions of a surface  into and  biharmonic Riemannian submersions
 from  3-dimensional Berger spheres.
We obtain a classification of proper biharmonic isometric immersions of a surface with constant mean curvature into
Berger 3-spheres. We also give a complete classification of proper biharmonic
Hopf tori in  Berger 3-sphere.  For Riemannian submersions, we  prove that a Riemannian submersion from  Berger 3-spheres into a surface is biharmonic if and only if it is harmonic.}
\end{quote}
\section{Introduction and preliminaries}

In this paper, we work in the category of smooth objects, so
manifolds, maps, vector fields, etc, are assumed to be smooth unless it is stated otherwise.\\

Recall a harmonic map $\varphi:(M, g)\to (N,
h)$ of a compact Riemannian manifold
$(M, g)$ into another Riemannian manifold $(N, h)$  that if $\varphi|_{\Omega}$ is  a critical point
of the energy functional defined by
\begin{equation}\nonumber
E\left(\varphi,\Omega \right)= \frac{1}{2} {\int}_{\Omega}
\left|{\rm d}\varphi \right|^{2}{\rm d}x.
\end{equation}
The Euler-Lagrange equation (see \cite{BW1,EL1})  is given by the vanishing of the tension field $\tau(\varphi)={\rm
Trace}_{g}\nabla {\rm d} \varphi$, i.e., $\tau(\varphi)={\rm
Trace}_{g}\nabla {\rm d} \varphi=0.$ \\
 In 1983, J. Eells and L. Lemaire \cite{EL1} extended the notion of harmonic maps to {\em biharmonic maps} which are critical points of the bienergy functional
\begin{equation}\label{bef}\notag
E^{2}\left(\varphi,\Omega \right)= \frac{1}{2} {\int}_{\Omega}
\left|\tau(\varphi) \right|^{2}{\rm d}x,
\end{equation}
for every compact subset $\Omega$ of $M$, where $\tau(\varphi)={\rm
Trace}_{g}\nabla {\rm d} \varphi$ is the tension field of $\varphi$. In 1986, G.Y. Jiang \cite{Ji} first computed the first
variation of the functional, and obtained that $\varphi$
is  biharmonic  if and only if its bitension field vanishes
identically, i.e.,
\begin{equation}\label{BT1}\notag
\tau^{2}(\varphi):={\rm
Trace}_{g}(\nabla^{\varphi}\nabla^{\varphi}-\nabla^{\varphi}_{\nabla^{M}})\tau(\varphi)
- {\rm Trace}_{g} R^{N}({\rm d}\varphi, \tau(\varphi)){\rm d}\varphi
=0,
\end{equation}
where $R^{N}$ is the curvature operator of $(N, h)$ defined by
$$R^{N}(X,Y)Z=
[\nabla^{N}_{X},\nabla^{N}_{Y}]Z-\nabla^{N}_{[X,Y]}Z.$$
Naturally,  any harmonic map  is always biharmonic.\\
 A Riemannian submersion is called a {\bf biharmonic  Riemannian submersion} if the Riemannian submersion is a biharmonic map.
Similarly, a submanifold is called a biharmonic submanifold if the isometric immersion that defines the submanifold is a biharmonic map.
As is well known, an isometric immersion is harmonic if and only if it is minimal, and hence biharmonic submanifolds include minimal submanifolds  as a subset.
We use {\bf proper biharmonic maps (respectively, Riemannian submersion, isometric immersion, submanifold)} to name those biharmonic maps
(respectively, Riemannian submersion, isometric immersion, submanifold) which are not harmonic.\\
 Many recent works in the geometric study of biharmonic maps have been focused on the existence of a proper biharmonic map between  two  ``good'' model spaces. The so-called ``good'' model spaces include space forms,  more general symmetric,  homogeneous spaces, etc. It would be also important to classify all proper biharmonic maps between two model spaces where the
existence is known.  We refer to two classification problems as follows\\
{\bf Chen's conjecture} \cite{CH1, CH2, CH}: every biharmonic submanifold in a Euclidean space $\r^n$ is minimal (i.e., harmonic)\\
{\bf The generalized Chen's conjecture}: every biharmonic submanifold of a Riemannian manifold of non positive curvature must be harmonic (minimal) (see e.g., [4--13]).\\
 The Chen's conjecture is still open for the general case,  and some results for affirmative
answers to  Chen's conjecture were shown in \cite{MO, BMO1, Ou5,  Ou7, FH}. For the generalized Chen's conjecture, Ou and Tang  (\cite{Ou6}) gave
many counter examples in a Riemannian manifold of negative curvature.
For some recent progress on biharmonic submanifolds, we refer the readers to \cite{AO}, [4-13], [23-30], etc., and the
references therein. \\

On the other hand, as it is well known that Riemannian submersions can be considered as the dual notion of  isometric immersions (i.e., submanifolds),  it is very interesting to study biharmonicity of  Riemannian submersions between Riemannian manifolds. In 2002, Oniciuc  \cite{Oni} first studied  biharmonic Riemannian submersions. In 2010,  Wang and Ou \cite{WO} first used the so-called integrability data to study  biharmonicity of a Riemannian submersion from a generic 3-manifold, they then used the main tool to derived a complete classification of biharmonic Riemannian submersions from a 3-dimensional space form into a surface.  In a recent paper \cite{AO},  Akyol and Ou  studied biharmonicity of a general Riemannian submersion and obtained biharmonic equations for Riemannian submersions with one-dimensional fibers and Riemannian submersions with
basic mean curvature vector fields of fibers. In particular, the authors of \cite{AO} used
the so-called integrability data to study biharmonic Riemannian submersions from $(n+1)$-dimensional spaces with one-dimensional fibers and obtained many examples of biharmonic Riemannian submersions. In \cite{Ura2}, the author studied biharmonicity a more general setting of Riemannian submersions with a $S^1$ fiber over a compact Riemannian manifold. In 2018, the authors in \cite{GO}  studied  generalized harmonic morphisms and obtained many examples of biharmonic Riemannian submersions  which are maps between Riemannian manifolds that pull back local harmonic functions to local biharmonic functions.\\

 Finally,  we refer an interested  reader to the recent works  \cite{Ou4} and \cite{WO1} for  complete classifications of constant mean curvature proper biharmonic surfaces in Thurston's 3-dimensional geometries and in BCV 3-spaces, a complete classification of proper biharmonic Hopf cylinders  BCV 3-spaces, complete classification of  proper biharmonic Riemannian submersions  from BCV 3-diemnsional spaces into a surface, and some constructions of examples of proper biharmonic Riemannian submersion from  $H^2\times\r\to \r^2$,\;or,  $\widetilde{SL}(2,\r)\to \r^2$. \\

In this paper,  we will study  biharmonic isometric immersions of a surface  into and  biharmonic Riemannian submersions
from 3-dimensional Berger sphere $S^3_\varepsilon$.
We  show that an isometric immersion of a surface with constant mean curvature into
Berger 3-sphere is proper biharmonc if and only if the surface is  a part of  $S^2(1/\sqrt{2})$ in $S^3$\;or\; a part of a Hopf torus in $S^3_\varepsilon$  whose  base curve is  a circle with radius $r=1/\sqrt{8-4\varepsilon^2}$ in the base sphere $S^2(\frac{1}{2})$. We also give a complete classification of proper biharmonic
Hopf tori in a Berger 3-sphere.  For Riemannian submersions, we  prove that a Riemannian submersion from a Berger 3-sphere into a surface is biharmonic if and only if it is harmonic.

\section{Biharmonic isometric immersions of a surface with constant mean curvature into
Berger 3-sphere $S^3_\varepsilon$}

Biharmonic surfaces in 3-dimensional space forms have been
completely classified in \cite{Ji2}, \cite{CH}, \cite{CMO1}, \cite{CMO2}),  and also  biharmonic  constant mean curvature surfaces in 3-dimensional BCV spaces and Sol space have been
completely classified (\cite{Ou4}). In this section, we  obtain a complete classification of isometric immersions of a surface with constant mean curvature into a
Berger 3-sphere $S^3_\varepsilon$. We also derive a complete classification of proper biharmonic
Hopf turi in a Berger 3-sphere.\\

Let us recall the definition of the so-called 3-dimensional Berger sphere (see e.g.,  \cite{B}).
Consider the Hopf map $ \psi: S^3
(1) \to S^2(\frac{1}{2})$  given by
\begin{equation}\label{cmc-1}
\begin{array}{lll}
\psi(x^1,x^2,x^3,x^4)= \frac{1}{2}
(2x^1x^3 + 2x^2x^4, 2x^2x^3- 2x^1x^4,(x^1)^2 + (x^2)^2-(x^3)^2 -(x^4)^2),
\end{array}
\end{equation}
or
\begin{equation}\label{cmc0}
\begin{array}{lll}
\psi(z, w) = \frac{1}{2}(2zw, |z|^2 - |w|^2) ,
\end{array}
\end{equation}
where $z = x^1 + ix^2$, $w = x^3 + ix^4$ and $S^2(\frac{1}{2})$ denotes a 2-sphere with radius $\frac{1}{2}$ (i.e., constant Gauss curvature $4$ ).
 It is not difficult to see that the map $\psi$ is a Riemannian submersion with
totally geodesic fibers $\psi^{-1}(\psi(z, w))$ which are the great circle passing through $(z, w)$ and
$(iz, iw)$.\\
With respect to the Hopf fibration, the following deformation of the standard metric $g$ on $S^3$
gives a family of metric  on the sphere:
\begin{equation}\label{cmc1}
\begin{array}{lll}
g_\varepsilon|_{ T^HS^3\times T^HS^3} =g|_{ T^HS^3\times T^HS^3} , g_\varepsilon|_{ T^VS^3\times T^VS^3}  = \varepsilon^2g, g_\varepsilon|_{ T^HS^3\times T^VS^3} = 0,
\end{array}
\end{equation}
where $T ^V S^3$
and $T^HS^3$ denote respectively the vertical and the horizontal
spaces determined by $\psi$. We call  a sphere  a Berger
3-sphere if the sphere $S^3$ endowed with the metric $g_\varepsilon$.  A Berger
3-sphere  is denoted by $S^3_\varepsilon$, i.e., $S^3_\varepsilon=(S^3,g_\varepsilon)$, where $\varepsilon\neq0$.
Suppose $x \in S^3$, we have the following facts:\\
(i) the vector fields
\begin{equation}\label{cmc2}
\begin{array}{lll}
X_1(x) = (-x^2, x^1, -x^4, x^3),\;
 X_2(x) = (-x^4, -x^3, x^2, x^1),\\
X_3(x) = (-x^3, x^4, x^1, -x^2)
\end{array}
\end{equation}
parallelize $S^3$,\\
(ii) $X_1$ is tangent to the fibres of the Hopf map (i.e. $d\psi(X_1) = 0$),\;and\\
(iii)$ X_2$ and $X_3$ are horizontal, but not basic.\\

From (\ref{cmc1}) we have a global orthonormal frame field
\begin{equation}\label{cmc3}
\begin{array}{lll}
\{E_1 = X_2, E_2 = X_3, E_3 =\varepsilon^{-1} X_1\}
\end{array}
\end{equation}
 on $S^3_\varepsilon$. \\

We adopt the following notation and sign convention for Riemannian
curvature operator:
\begin{equation}
 R(X,Y)Z=\nabla_{X}\nabla_{Y}Z
-\nabla_{Y}\nabla_{X}Z-\nabla_{[X,Y]}Z,\\
\end{equation}
and the Riemannian and the Ricci curvatures:
\begin{equation}
\begin{array}{lll}
&&  R(X,Y,Z,W)=g( R(Z,W)Y,X),\\
&& {\rm Ric}(X,Y)= {\rm Trace}_{g}R=\sum\limits_{i=1}^3 R(Y, e_i, X,
e_i)=\sum\limits_{i=1}^3 \langle R( X,e_i) e_i, Y\rangle.
\end{array}
\end{equation}
With respect to the frame, a straightforward computation shows that
\begin{equation}\label{Lie}
[E_1,E_2]=2 \varepsilon E_{3},\;\; [E_2,E_3]=\frac{2}{\varepsilon}E_1,\; [E_3,E_1]=\frac{2}{\varepsilon}E_2.
\end{equation}
The Levi-Civita connection
of the metric $g_\varepsilon$  has the expression as
\begin{equation}\label{g1}
\begin{cases}
\nabla_{E_{1}}E_{1}=0,\;\;\nabla_{E_{1}}E_{2}=\varepsilon E_{3},\;\;\nabla_{E_{1}}E_{3}=-\varepsilon E_{2},\\
\nabla_{E_{2}}E_{1}=-\varepsilon E_{3},\;\;\nabla_{E_{2}}E_{2}=0,\;\;\nabla_{E_{2}}E_{3}=\varepsilon E_{1},\\
\nabla_{E_{3}}E_{1}=\frac{2-\varepsilon^2}{\varepsilon}E_{2},\;\;\nabla_{E_{3}}E_{2}=-\frac{2-\varepsilon^2}{\varepsilon} E_{1},\;\;
\nabla_{E_{3}}E_{3}=0.
\end{cases}
\end{equation}

A further computation (see also \cite{B}) gives the possible
nonzero components of the curvatures:
\begin{equation}\label{g2}
\begin{array}{lll}
 R_{1212}=g(R(E_{1},E_{2})E_{2},E_{1})=4-3\varepsilon^2,\\
R_{1313}=g(R(E_{1},E_{3})E_{3},E_{1})=R_{2323}=g(R(E_{2},E_{3})E_{3},E_{2})=\varepsilon^2,\\
{\rm all\;other }\; R_{ijkl}=g(R(E_{k},E_{l})E_{j},E_{i})=0,\;i,j,k,l=1,2,3.
\end{array}
\end{equation}
 and the
Ricci curvature:
\begin{equation}\label{g3}
\begin{array}{lll}
 {\rm Ric}\, (E_{1},E_{1})={\rm Ric}\,
(E_{2},E_{2})=4-2\varepsilon^2,\\{\rm Ric}\,
(E_{3},E_{3})=2\varepsilon^2,\;{\rm all\;other }\; {\rm
Ric}\, (E_i,E_j)=0,\;i\neq j.
\end{array}
\end{equation}

\begin{remark}\label{r0}
 From (i), (ii), (iii), (\ref{cmc3}), (\ref{Lie}) and (\ref{g1}), we would like to point out the following:\\
$(a)$:  The  map $ \psi: S^3_{\varepsilon} \to S^2(\frac{1}{2})$,
$\psi(z, w) = \frac{1}{2}(2zw, |z|^2 - |w|^2)$, where $z = x^1 + ix^2$, $w = x^3 + ix^4$,
is a Riemannian submersion with totally geodesic fibers from a Berger 3-sphere $S^3_{\varepsilon}$  to a 2-sphere $S^2(\frac{1}{2})$  with  constant Gauss curvature $4$,\;i.e., the Riemannian submersion is harmonic.\\
$(b)$:  $\{E_1 = X_2, E_2 = X_3, E_3 =\varepsilon^{-1} X_1\}$ is  an orthonormal  frame on $S^3_{\varepsilon}$  with $E_3$ being vertical. \\
$(c)$: $\{E_1 = X_2, E_2 = X_3\}$ is horizontal, but not basic.\\
\end{remark}

 We will use the following equation for biharmonic hypersurfaces in a
generic Riemannian manifold.

\begin{theorem}$($\cite{Ou3}$)$\label{MTH}
Let $\varphi:M^{m}\longrightarrow N^{m+1}$ be an isometric
immersion of codimension one with mean curvature vector
$\eta=H\xi$. Then $\varphi$ is biharmonic if and only if:
\begin{equation}\label{BHEq}
\begin{cases}
\Delta H-H |A|^{2}+H{\rm
Ric}^N(\xi,\xi)=0,\\
 2A\,({\rm grad}\,H) +\frac{m}{2} {\rm grad}\, H^2
-2\, H \,({\rm Ric}^N\,(\xi))^{\top}=0,
\end{cases}
\end{equation}
where ${\rm Ric}^N : T_qN\longrightarrow T_qN$ denotes the Ricci
operator of the ambient space defined by $\langle {\rm Ric}^N\,
(Z), W\rangle={\rm Ric}^N (Z, W)$ and  $A$ is the shape operator
of the hypersurface with respect to the unit normal vector $\xi$.
\end{theorem}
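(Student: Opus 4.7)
The plan is to derive the two biharmonic equations by evaluating the bitension-field formula from the introduction on $\tau(\varphi)=mH\xi$, then splitting the resulting identity into its normal and tangential parts. First I would fix a local orthonormal frame $\{e_i\}_{i=1}^{m}$ on $M$, geodesic at a chosen point $p$ (so that $\nabla^{M}_{e_i}e_j|_{p}=0$), and let $\xi$ be a local unit normal with shape operator $A$ and second fundamental form $B$. Because $\varphi$ is an isometric immersion, the tension field is simply the mean curvature vector, $\tau(\varphi)=\sum_i B(e_i,e_i)=mH\xi$, so the task is reduced to computing $\tau^{2}(\varphi)$ when its argument has this special form.

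Next I would differentiate $mH\xi$ twice in the pullback connection. One application of the Weingarten formula $\nabla^{N}_{X}\xi=-A(X)$ gives
\[
\nabla^{\varphi}_{e_i}(mH\xi)=m(e_iH)\xi-mH\,A(e_i),
\]
and applying $\nabla^{N}_{e_i}$ a second time, together with the Gauss formula $\nabla^{N}_{X}Y=\nabla^{M}_{X}Y+\langle A(X),Y\rangle\xi$ and the geodesic property at $p$, yields
\[
\mathrm{Trace}_{g}(\nabla^{\varphi}\nabla^{\varphi}-\nabla^{\varphi}_{\nabla^{M}})(mH\xi)
= m(\Delta H)\xi - mH|A|^{2}\xi - 2m\,A(\mathrm{grad}\,H) - mH\,\mathrm{tr}(\nabla A).
\]
At this stage the key step is the Codazzi equation: pairing it against $e_i$ and using the symmetry of $A$ produces the standard divergence identity $\mathrm{tr}(\nabla A)=m\,\mathrm{grad}\,H-(\mathrm{Ric}^{N}\xi)^{\top}$, which converts the last term into tangential pieces expressed via $H$ and the ambient Ricci operator.

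For the curvature contribution, I would expand $\sum_i R^{N}(e_i,mH\xi)e_i$ using the sign convention fixed in the introduction: the first Bianchi identity and the skew-symmetry of $R^{N}$ imply that this sum has normal component $-mH\,\mathrm{Ric}^{N}(\xi,\xi)\xi$ and tangential component $-mH(\mathrm{Ric}^{N}\xi)^{\top}$. Subtracting this from the Laplacian computation above and setting the result to zero, I would then read off the normal and tangential components separately. The normal part, after dividing by $m$, becomes $\Delta H-H|A|^{2}+H\,\mathrm{Ric}^{N}(\xi,\xi)=0$; the tangential part, using $mH\,\mathrm{grad}\,H=\tfrac{m}{2}\mathrm{grad}\,H^{2}$, becomes $2A(\mathrm{grad}\,H)+\tfrac{m}{2}\mathrm{grad}\,H^{2}-2H(\mathrm{Ric}^{N}\xi)^{\top}=0$, which together constitute \eqref{BHEq}.

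The computation is conceptually straightforward, so I anticipate the main obstacle being purely bookkeeping: keeping the sign conventions for $R^{N}$, $\mathrm{Ric}^{N}$, the Weingarten map, and the Codazzi equation mutually consistent, particularly in showing that the two $(\mathrm{Ric}^{N}\xi)^{\top}$ contributions—one coming from the divergence of $A$ and one from the ambient curvature term—combine with the correct factor of two, and that the sign of $H\,\mathrm{Ric}^{N}(\xi,\xi)\xi$ survives with the $+$ sign required for the first equation.
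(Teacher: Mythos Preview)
The paper does not actually prove this theorem: it is quoted verbatim from \cite{Ou3} and used as a tool for the classification results that follow, so there is no ``paper's own proof'' to compare against. Your outline is exactly the standard derivation one finds in \cite{Ou3}: compute the rough Laplacian of $mH\xi$ via the Gauss and Weingarten formulas, convert $\mathrm{tr}(\nabla A)$ into $m\,\mathrm{grad}\,H-(\mathrm{Ric}^{N}\xi)^{\top}$ using Codazzi, evaluate the curvature trace $\sum_i R^{N}(e_i,mH\xi)e_i$, and then split normal/tangential. The bookkeeping you flag---that the two $(\mathrm{Ric}^{N}\xi)^{\top}$ contributions (one from the divergence of $A$, one from the ambient curvature term) add rather than cancel to give the factor $2$, and that the $H\,\mathrm{Ric}^{N}(\xi,\xi)$ term enters with a plus sign after subtracting the curvature trace---checks out under the sign conventions fixed in the introduction, so the proposal is correct as stated.
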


We now study  biharmonic constant mean curvature (CMC) surfaces in a 3-dimensional
Berger sphere $S^3_\varepsilon$.
\begin{theorem}\label{Th1}
A constant mean curvature surface in  3-dimensional Berger spheres $S^3_\varepsilon$ is proper biharmonic if
and only if it is a part of:\\
$(i)$ $S^2(1/\sqrt{2})$ in $S^3$,\;or\\
$(ii)$  a Hopf torus in $S^3_\varepsilon$, i.e., the inverse image of the Hopf  fibration of a circle of radius $r=\frac{1}{2\sqrt{2-\varepsilon^2}}$ with $\varepsilon^2 < 1$ in the base sphere $S^2(\frac{1}{2})$.
\end{theorem}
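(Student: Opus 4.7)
The plan is to apply Theorem \ref{MTH} with $m=2$. Since $H$ is constant, the gradient terms in (\ref{BHEq}) vanish and biharmonicity reduces to the pair $|A|^{2}=\operatorname{Ric}^{N}(\xi,\xi)$ together with $H\,(\operatorname{Ric}^{N}(\xi))^{\top}=0$. Proper biharmonicity forces $H\neq 0$, so $\xi$ must be a Ricci eigenvector. By (\ref{g3}) the Ricci operator of $S^{3}_{\varepsilon}$ is diagonal in the frame $\{E_{1},E_{2},E_{3}\}$ with eigenvalue $4-2\varepsilon^{2}$ on the horizontal plane and $2\varepsilon^{2}$ on the vertical line. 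When $\varepsilon^{2}=1$ the ambient metric is Einstein, the eigenvector condition is vacuous, and the problem collapses to the classification of biharmonic CMC surfaces in round $S^{3}$, which by Caddeo--Montaldo--Oniciuc consists exactly of $S^{2}(1/\sqrt 2)$, yielding case $(i)$. For $\varepsilon^{2}\neq 1$ the two Ricci eigenvalues differ, so at every point of a connected surface $\xi$ is either purely horizontal or purely vertical.

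The vertical case $\xi=\pm E_{3}$ is ruled out at once: by (\ref{Lie}), $[E_{1},E_{2}]=2\varepsilon E_{3}$, so the horizontal distribution is not integrable and no surface is tangent to it. Hence $\xi$ is horizontal, and I would write $\xi=\cos\theta\,E_{1}+\sin\theta\,E_{2}$ for a smooth angle $\theta$ on the surface, with oriented tangent frame $\{E_{3},\;T=-\sin\theta\,E_{1}+\cos\theta\,E_{2}\}$. Because the tangent plane contains the vertical $E_{3}$ everywhere, the surface is saturated under the Hopf $S^{1}$-action, so it equals $\psi^{-1}(\gamma)$ for some regular curve $\gamma$ in $S^{2}(1/2)$; that is, it is a Hopf tube.

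The core computation is the shape operator $A$. Using (\ref{g1}) I would get
\begin{equation*}
\nabla_{E_{3}}\xi=\Bigl(E_{3}(\theta)+\tfrac{2-\varepsilon^{2}}{\varepsilon}\Bigr)T,\qquad \nabla_{T}\xi=T(\theta)\,T-\varepsilon\,E_{3}.
\end{equation*}
Symmetry of $A$, equivalently Frobenius integrability of the tangent distribution (which amounts to $\langle [E_{3},T],\xi\rangle=0$), then forces the relation $E_{3}(\theta)=-2/\varepsilon$. With this identity the matrix of $A$ in the basis $\{E_{3},T\}$ collapses to
\begin{equation*}
A=\begin{pmatrix} 0 & \varepsilon \\ \varepsilon & -T(\theta) \end{pmatrix},
\end{equation*}
which gives $2H=-T(\theta)$ and $|A|^{2}=T(\theta)^{2}+2\varepsilon^{2}$.

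Plugging into the biharmonic identity $|A|^{2}=\operatorname{Ric}^{N}(\xi,\xi)=4-2\varepsilon^{2}$ yields $T(\theta)^{2}=4(1-\varepsilon^{2})$. A nonzero solution (required for properness) exists only when $\varepsilon^{2}<1$, in which case $|H|=\sqrt{1-\varepsilon^{2}}$ is automatically constant, confirming the CMC hypothesis. To close case $(ii)$, I would push the frame down via $\psi$: a short calculation shows $\nabla_{T}T=-T(\theta)\,\xi$, so by O'Neill the horizontal lift of $\widetilde\nabla_{\gamma'}\gamma'$ equals $-T(\theta)\,\xi$, forcing $\gamma$ to have constant geodesic curvature $\kappa_{\gamma}=2\sqrt{1-\varepsilon^{2}}$; an elementary spherical computation on the sphere of radius $1/2$ then identifies $\gamma$ as a small circle of Euclidean radius $r=1/(2\sqrt{2-\varepsilon^{2}})$. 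I expect the shape-operator step---in particular, extracting $E_{3}(\theta)=-2/\varepsilon$ from the integrability constraint and reducing $A$ to its clean $2\times 2$ form---to be the main technical obstacle; once this is in hand both the value of $|H|$ and the identification of the base curve drop out with no further difficulty.
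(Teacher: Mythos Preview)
Your proposal is correct and follows essentially the same route as the paper: both reduce (\ref{BHEq}) under the CMC hypothesis, split into the Einstein case $\varepsilon^{2}=1$ (handled by \cite{CMO1,CMO2}) and the non-Einstein case, rule out vertical $\xi$ via non-integrability of the horizontal distribution, compute the shape operator of the surface in the horizontal-$\xi$ case, and identify the base curve through its geodesic curvature. Your angle parametrisation $\xi=\cos\theta\,E_{1}+\sin\theta\,E_{2}$ and the $S^{1}$-saturation argument for recognising the Hopf tube are slightly more streamlined than the paper's explicit $(a,b)$-frame and Lie-bracket computations (\ref{th3})--(\ref{th7}), but the substance is the same.
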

\begin{proof}
Let $\{e_1=\sum\limits_{i=1}^3a^i_1E_i,\;\; e_2=\sum\limits_{i=1}^3a^i_2E_i,\;\; \xi=\sum\limits_{i=1}^3a^i_3E_i\}$ be an
orthonormal frame on $S^3_\varepsilon$ adapted to the surface  with $\xi$ being normal. We then use the
Ricci curvature (\ref{g3}) to  have ${\rm
Ric}\,(\xi,\xi)=4-2\varepsilon^2+(4\varepsilon^2-4)(a_3^3)^2, \;\;({\rm
Ric}\,(\xi))^{\top}=(4\varepsilon^2-4)a^3_1a_3^3e_1+(4\varepsilon^2-4)a^3_2a_3^3e_2$.
 From these and the  biharmonic surface  (\ref{BHEq}), we conlude that  a
surface with constant mean curvature $H$ is biharmonic if and only if
\begin{equation}
\begin{cases}
-H \left[|A|^{2}-\left(4-2\varepsilon^2\right)-(4\varepsilon^2-4)(a_3^3)^2\right]=0,\\
(4\varepsilon^2-4)a^3_1a_3^3H=0,\\
(4\varepsilon^2-4)a^3_2a_3^3H=0,
\end{cases}
\end{equation}
which has solution $H=0$ implying that the surface is harmonic (minimal), or,
\begin{equation}\label{th1}
\begin{cases}
|A|^{2}-\left(4-2\varepsilon^2\right)-(4\varepsilon^2-4)(a_3^3)^2=0,\\
(4\varepsilon^2-4)a^3_1a_3^3=0,\\
(4\varepsilon^2-4)a^3_2a_3^3=0.
\end{cases}
\end{equation}
We  solve (\ref{th1}) by considering the following cases:\\
Case I: $\varepsilon=\pm 1$. In this case, we have $|A|^2 = 2$ and the corresponding Berger sphere $S^3_\varepsilon$ is a standard  3-dimensional sphere $S^3$. It follows from \cite{CMO1} \cite{CMO2} that  the only proper biharmonic surface in a 3-dimensional sphere $S^3$ is a part of $S^2(1/\sqrt{2})$ in $S^3$ .\\
Case II: $\varepsilon \ne \pm 1$. In this case, by the last two equations of (\ref{th1}), we have either $a_3^3=0$ or $a_1^3=a_2^3=0$.\\
For Case II-A: $a_3^3=0$, using the first equation of (\ref{th1}) we have
\begin{equation}\label{th2}
|A|^{2}=4-2\varepsilon^2.
\end{equation}
Noting that $a_3^3=0$ implies that the normal vector field of the surface $\Sigma$ is always orthogonal to $E_3 = \varepsilon^{-1}X_1$
so we can choose an another orthonormal frame $\{e_1 = aE_1 + bE_2, e_2 = E_3,\xi = bE_1 -aE_2\}$ adapted to the surface with $a^2 + b^2 = 1$ and $\xi$
being the unit normal vector filed. We use (\ref{g1}) to compute
\begin{equation}\label{th3}
\nabla_{e_1}\xi=\{ae_1(b)-be_1(a)\}e_1-\varepsilon e_2,\;
\nabla_{e_2}\xi=\left(ae_2(b)-be_2(a)+\frac{2-\varepsilon^2}{\varepsilon}\right)e_1.
\end{equation}
With respect to the chosen adapted orthonormal
frame, by a further computation, the second fundamental form of the surface $\Sigma$  given by
\begin{equation}\label{th4}
\begin{array}{lll}
 h(e_1,e_1)=-\langle\nabla_{e_1}\xi,e_1\rangle=-ae_1(b)+be_1(a),\;
 h(e_1,e_2)=-\langle\nabla_{e_1}\xi,e_2\rangle=\varepsilon,\\
 h(e_2,e_1)=-\langle\nabla_{e_2}\xi,e_1\rangle=-ae_2(b)+be_2(a)-\frac{2-\varepsilon^2}{\varepsilon},\;
  h(e_2,e_2)=-\langle\nabla_{e_2}\xi,e_2\rangle=0.
\end{array}
\end{equation}
By (\ref{th3}) and (\ref{th4}), we have
\begin{equation}\label{th5}
\begin{array}{lll}
\nabla_{e_1}e_1=-\{ae_1(b)-be_1(a)\}\xi=2H\xi,\;
\nabla_{e_1}\xi=-2He_1-\varepsilon e_2,\;
\nabla_{e_1}e_2=\varepsilon \xi.
\end{array}
\end{equation}
It follows from (\ref{th4}), the symmetry $h(e_1, e_2) = h(e_2, e_1)$, and $0 = e_2(a^2 + b^2) = 2ae_2(a) + 2be_2(b)$ that $e_2(a) =\frac{2}{\varepsilon}b, e_2(b) = -\frac{2}{\varepsilon}a.$\\
Denoting by $\alpha_1=\xi = bE_1 -aE_2,\alpha_2=e_1 = aE_1 + bE_2$, and $\alpha_3=e_2 = E_3$, a straightforward computation using (\ref{Lie}) and (\ref{th5}) gives
\begin{equation}\label{th7}
\begin{array}{lll}
[\alpha_1,\alpha_3]=0,\;[\alpha_2,\alpha_3]=0,\;

[\alpha_1,\alpha_2]=\left(b\alpha_1(a)-a\alpha_1(b)\right)\alpha_1+2H\alpha_2+2\varepsilon \alpha_3.
\end{array}
\end{equation}
By Remark \ref{r0},  the map
 $\psi: S^3_\varepsilon\to S^2(\frac{1}{2})$
\begin{equation}\label{th8}\notag
\begin{array}{lll}
\psi(x^1,x^2,x^3,x^4)= \frac{1}{2}
(2x^1x^3 + 2x^2x^4, 2x^2x^3- 2x^1x^4,(x^1)^2 + (x^2)^2-(x^3)^2 -(x^4)^2),
\end{array}
\end{equation}
or
\begin{equation}\label{th9}\notag
\begin{array}{lll}
\psi(z, w) = \frac{1}{2}(2zw, |z|^2 - |w|^2) ,
\end{array}
\end{equation}
is a Riemannian submersion with
totally geodesic fibers with an orthonoromal frame $ \{E_1, E_2,\;\alpha_3=E_3\}$
   on $S_{\varepsilon}^3$ with $ E_3$  being  vertical, where $z = x^1 + ix^2$,\;$w = x^3 + ix^4$.

   An interesting thing is that, By (\ref{th7}),  one sees that the orthonormal frame $\{e_1, e_2, \xi\}$ adapted to the constant mean curvature surface happens to be an orthonormal frame $ \{\alpha_1=\xi = bE_1 -aE_2,\alpha_2=e_1 = aE_1 + bE_2,  \alpha_3=e_2 = E_3\} $  adapted to the Hopf fibration $\psi$. The tangent vector field of the surface  $e_2 = E_3$ also implies that the surface contains all the fibers of $\psi$ which intersect the surface. Thus, locally, the surface is formed by the fibers of $\psi$ that pass through every point on an integral curve of $e_1=\alpha_2$ which is horizontal and basic to the Hopf fibration $\psi$. It follows that the constant mean curvature surface is actually a Hopf torus, i.e., the inverse image of a curve on base sphere of the Hopf fibration.

   More precisely, we can determine the torus as follows.

  Let $\gamma:I\to S^3_\varepsilon$, $\gamma=\gamma(s)$, be an  integral curve of the basic vector field  $\alpha_2=e_1 = aE_1 + bE_2$ on the surface $\Sigma$ with arclength parameter, then it is horizontal with respect to the Riemannian submersion $\psi$. Let $\beta(s) = \psi (\gamma(s))$ be the curve in the base space of the Riemannian submersion, then the surface $\Sigma$ is $\Sigma=\cup_{s\in I}\psi^{-1}\left(\beta(s)\right)$, a Hopf torus over the curve $\beta(s)\subset S^2(\frac{1}{2})$.\\

Noting also that $\alpha_1=\xi = bE_1 -aE_2,\alpha_2=e_1 = aE_1 + bE_2, \alpha_3=e_2 = E_3$, then (\ref{th5}) turns into
\begin{equation}\label{th10}
\begin{array}{lll}
\nabla_{\alpha_2}\alpha_2=2H\alpha_1=k_g\alpha_1,\;
\nabla_{\alpha_2}\alpha_1=-2H\alpha_2-\varepsilon \alpha_3=-k_g\alpha_2+\tau_g\alpha_3,\\
\nabla_{\alpha_2}\alpha_3=\varepsilon \alpha_1=-\tau_g\alpha_1,
\end{array}
\end{equation}
which is the Frenet formula of the curve $\gamma=\gamma(s)$ (see also [\cite{Va1},\;Example 3.4.1],  and means that $k_g$ is the geodesic curvature of the base curve,  $\tau_g$ is the geodesic torsion of $\gamma(s)$. It follows from Eqs. (\ref{th4}) and (\ref{th10}) that
\begin{equation}\label{th11}
\begin{array}{lll}
|A|^2=k_g^2+\varepsilon^2,\;
H=k_g/2.
\end{array}
\end{equation}
Comparing (\ref{th2}) and (\ref{th11})) we get
\begin{equation}\label{th12}\notag
\begin{array}{lll}
k_g^2=4(1-\varepsilon^2)>0,\;
H^2=1-\varepsilon^2>0.
\end{array}
\end{equation}
Since the curve in the base sphere $S^2(\frac{1}{2})$ has constant geodesic curvature and hence $k_g=2\sqrt{1-\varepsilon^2}$,  one can check that this curve, considered as a curve in Euclidean 3-space of which $S^2(\frac{1}{2})$ is a subset, has curvature $k=\sqrt{k_g^2+k_n^2}=2\sqrt{2-\varepsilon^2}$ and torsion $\tau=-\frac{2k'}{kk_g}=0$. Combining this, we find the base curve of the Hopf cylinder to be a circle on $S^2(\frac{1}{2})$ with radius $r=\frac{1}{2\sqrt{2-\varepsilon^2}}$.\\
For Case II-B: $a^3_1 = a^3_2 = 0$ and $a_3^3=\pm1$. It follows, in this case,
Span$\{e_1, e_2\}$ = Span$\{E_1, E_2\}$. This implies the distribution determined by $\{E_1, E_2\}$ is integrable and hence (by Frobenius
theorem) is involutive. This leads to $\varepsilon=0$ by (\ref{Lie}), a contradiction.\\
Summarizing all results proved above we obtain the theorem.

\end{proof}

\begin{theorem}\label{Th2}
 Let $\psi: S^3_\varepsilon\to S^2(\frac{1}{2})$, $\psi(z, w) = \frac{1}{2}(2zw, |z|^2 - |w|^2)$
be the Hopf fibration, and $\beta : I \to  S^2(\frac{1}{2})$
be an immersed regular curve parameterized by arc length. Then the Hopf torus $\Sigma=\cup_{s\in I}\psi^{-1}(\beta(s))$ is a proper biharmonic surface in a Berger 3-sphere $ S^3_\varepsilon$ if and only if it is  the curve $\beta(s)$ on the base sphere $S^2(\frac{1}{2})$  is circle of radius $r=\frac{1}{2\sqrt{2-\varepsilon^2}}$ with $\varepsilon^2<1$.
\end{theorem}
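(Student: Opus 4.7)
The plan is to reduce Theorem \ref{Th2} to Theorem \ref{Th1}. The ``if'' direction follows at once from Theorem \ref{Th1}(ii), so the real content is the forward implication: assuming the Hopf torus $\Sigma=\cup_{s\in I}\psi^{-1}(\beta(s))$ is proper biharmonic, I must show that $\beta$ has constant geodesic curvature $2\sqrt{1-\varepsilon^2}$, i.e., that it is a circle of radius $r=\frac{1}{2\sqrt{2-\varepsilon^2}}$ on $S^2(\frac{1}{2})$.

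The strategy is first to prove that $\Sigma$ is CMC, and then to invoke Theorem \ref{Th1}. For any Hopf torus, the vertical field $E_3$ is tangent to $\Sigma$ and the horizontal lift of $\beta'(s)$ supplies the second tangent direction, so the adapted frame $\{e_1,\,e_2=E_3,\,\xi\}$ from Case II-A of the proof of Theorem \ref{Th1} is available. All the local formulas used there---in particular (\ref{th4}) for the second fundamental form and (\ref{th10}) for the Frenet equations of the horizontal lift $\gamma$---remain valid if we now allow $H$ to depend on the arclength parameter $s$ of $\beta$. Moreover, the Hopf $S^1$-action is by isometries of $S^3_\varepsilon$ that preserve $\Sigma$, which forces $E_3(H)=0$; equivalently, $H=k_g(s)/2$ depends only on $\beta(s)$. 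Hence $H=H(s)$ and $\mathrm{grad}\,H=H'(s)\,e_1$.

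I then substitute into the biharmonic system (\ref{BHEq}) with $m=2$. Since $\xi\in\mathrm{span}\{E_1,E_2\}$, which is a Ricci eigenspace with eigenvalue $4-2\varepsilon^2$ by (\ref{g3}), the Ricci operator sends $\xi$ to $(4-2\varepsilon^2)\xi$, so the tangential component $(\mathrm{Ric}^N(\xi))^{\top}$ vanishes. From (\ref{th4}) the shape operator in the basis $\{e_1,e_2\}$ satisfies $A(e_1)=2H\,e_1+\varepsilon\,e_2$ and $A(e_2)=\varepsilon\,e_1$. Plugging these into the tangential (second) equation of (\ref{BHEq}) yields, after a short calculation, the identity
\[
\bigl(6H\,H'(s)\bigr)\,e_1 \;+\; \bigl(2\varepsilon\,H'(s)\bigr)\,e_2 \;=\; 0.
\]
Since $\varepsilon\neq 0$, the $e_2$-coefficient forces $H'(s)\equiv 0$, so $H$ is constant on $\Sigma$.

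Once CMC is established, Theorem \ref{Th1} applies and the Hopf torus must be the one in (ii); the geometric conversion $k_g=2\sqrt{1-\varepsilon^2}\Rightarrow r=\frac{1}{2\sqrt{2-\varepsilon^2}}$ is precisely the one carried out at the end of the proof of Theorem \ref{Th1}. I expect no serious obstacle: the argument rides on the vanishing $(\mathrm{Ric}^N(\xi))^{\top}=0$ (guaranteed by the Ricci-eigenspace structure of the horizontal distribution) together with the trivial gradient formula $\mathrm{grad}\,H=H'(s)\,e_1$, so the normal (first) equation of (\ref{BHEq}) is not even needed to force $H$ constant. The one step deserving care is the identity $E_3(H)=0$ for a general, not a priori CMC, Hopf torus---which is what reduces the biharmonic PDE in two variables to an ODE in $s$ alone.
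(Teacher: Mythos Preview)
Your proposal is correct and follows essentially the same route as the paper: set up the adapted frame $\{e_1,e_2=E_3,\xi\}$ on the Hopf torus, observe that $\xi$ lies in the Ricci eigenspace $\mathrm{span}\{E_1,E_2\}$ so the tangential Ricci term vanishes, and use the tangential biharmonic equation together with $\varepsilon\neq 0$ to force $H'=k_g'/2=0$, after which Theorem~\ref{Th1} finishes the identification of the base circle. The only cosmetic difference is that the paper quotes the specialized Hopf-cylinder biharmonic system (Eq.~(16) of \cite{Ou3}) and also records the normal equation $k_g''-k_g(k_g^2-(4-4\varepsilon^2))=0$ before invoking Theorem~\ref{Th1}, whereas you work directly from (\ref{BHEq}) and offload the normal-equation step entirely to Theorem~\ref{Th1}; both arguments hinge on the same $e_2$-coefficient $2\varepsilon H'(s)=0$ (equivalently $-\varepsilon k_g'=0$).
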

\begin{proof}
 Let $\beta : I \to  S^2(\frac{1}{2})$ be an immersed regular curve parameterized by arc length with the geodesic curvature $k_g$.
It follows  from a result in \cite{Ou3} that we can take the horizontal lifts of the tangent and the principal normal
vectors of the curve $\beta: X =a E_1 +b E_2 $ and $\xi= bE_1-a E_2 $ (where $a^2 +b^2=1$) together with $V = E_3$ to be an adapted
orthonormal frame of the Hopf cylinder. A direct computation using (\ref{g3}) gives:

\begin{equation}\label{H1}
\begin{array}{lll}
{\rm Ric}\,(\xi,\xi)=4-2\varepsilon^2,\;
{\rm Ric}\,(\xi,X)=
{\rm Ric}\,(\xi,V)=0.
\end{array}
\end{equation}
We can check that the the geodesic torsion of the lifting curve $\psi^{-1}(\beta(s))$
\begin{equation}\label{H2}
\begin{array}{lll}
\tau_g=-\langle\nabla_X V,\xi\rangle=-\langle\nabla_{a E_1 +b E_2} E_3, bE_1-a E_2\rangle=-\varepsilon.
\end{array}
\end{equation}
It follows from  Eq. (16) in \cite{Ou3} that the surface $\sum$ in $S_{\varepsilon}^3$ is biharmonic if and only if
\begin{equation}\label{Hb3}\notag
\begin{cases}
k''_g-k_g(k^2_g+2\tau_g^2)+k_g{\rm Ric}(\xi,\xi)=0,\\
3k'_gk_g-2k_g{\rm Ric}(\xi,X)=0,\\
 k'_g\tau_g+k{\rm Ric}(\xi,V)=0.
\end{cases}
\end{equation}
Substituting (\ref{H1}) and (\ref{H2}) into the above equation, we get
\begin{equation}\label{H3}
k''_g-k_g\left(k_g^2-(4-4\varepsilon^2)\right)=0,\;
3k'_gk_g=0\;{\rm and}\;
-\varepsilon k'_g=0.
\end{equation}
We solve (\ref{H3})to have $k_g = 0$,  which means that  the surface $\Sigma$ is  minimal surface, or $\beta$ has constant geodesic curvature $k_g^2=4-4\varepsilon^2>0$. It is easy to see from \cite{Ou3} (Page 229) that the mean curvature of the Hopf torus is given by $H = \frac{k_g}{2}$
and $|A|^2 = k_g^2 + 2\tau_g^2 = 4-2\varepsilon^2= constant$. From these we deduce that the Hopf cylinder $\Sigma=\cup_{s\in I}\psi^{-1}(\beta(s))$ is proper
biharmonic if and only if
\begin{equation}\label{H4}
\begin{array}{lll}
H^2=1-\varepsilon^2>0,\;
|A|^2 = 4-2\varepsilon^2>0.
\end{array}
\end{equation}
 We apply the characterizations of Hopf tori in $S^3_\varepsilon$  given in Theorem \ref{Th1} to obtain the Theorem.

\end{proof}
\begin{corollary}\label{coro1}
 A totally umbilical  surface in a Berger 3-sphere  $S^3_\varepsilon$  is proper biharmonic if and only if it is a part of $S^2(1/\sqrt{2})$ in $S^3$.
\end{corollary}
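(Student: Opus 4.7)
My plan is to reduce the corollary to Theorem~\ref{Th1} by proving two facts: (a) a totally umbilical proper biharmonic surface in $S^3_\varepsilon$ is automatically CMC, and (b) the Hopf tori appearing in Theorem~\ref{Th1} are never totally umbilical for $\varepsilon \neq 0$.

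For (a), I would compare the tangential biharmonic equation with the Codazzi identity. For a totally umbilical surface, $A = H\,\mathrm{Id}$, so the tangential part of (\ref{BHEq}) with $m=2$ reduces to $2\nabla H^2 = 2H(\mathrm{Ric}^N(\xi))^\top$, and by the computation already carried out in the proof of Theorem~\ref{Th1} the right-hand side equals $(4\varepsilon^2-4)H\,a^3_3(a^3_1 e_1 + a^3_2 e_2)$. On the other hand, the Codazzi equation for an umbilical hypersurface reads $X(H)\,Y - Y(H)\,X = (R^N(X,Y)\xi)^\top$; pairing with a tangent vector and expanding $R^N$ in the Hopf frame via (\ref{g2}), then using the orthogonality of the change-of-frame matrix $(a^j_k)$, yields the key identity $\nabla H = (4-4\varepsilon^2)\,a^3_3(a^3_1 e_1 + a^3_2 e_2) = -(\mathrm{Ric}^N(\xi))^\top$. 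Plugging this back into the tangential biharmonic equation produces $3H\,\nabla H = 0$, so on the open set where $H \neq 0$ (nonempty by proper biharmonicity) $H$ is locally constant; by continuity a nonzero constant prevails on the whole connected surface.

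With CMC in hand, Theorem~\ref{Th1} leaves only two options: a part of $S^2(1/\sqrt{2}) \subset S^3$, or a Hopf torus over a circle in $S^2(\tfrac{1}{2})$ of radius $1/(2\sqrt{2-\varepsilon^2})$ with $\varepsilon^2 < 1$. For (b), the adapted frame $\{e_1, e_2, \xi\}$ on a Hopf torus built in the proof of Theorem~\ref{Th1} satisfies $h(e_1, e_2) = \varepsilon \neq 0$ by (\ref{th4}), whereas any totally umbilical surface satisfies $h(e_1, e_2) = H\,g(e_1, e_2) = 0$; this is the required contradiction, and the Hopf torus case is excluded. The only remaining possibility is the round sphere $S^2(1/\sqrt{2}) \subset S^3$, which is classically totally umbilical and, by Theorem~\ref{Th1}(i), proper biharmonic. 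The converse is automatic.

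The main obstacle is step (a). In a space form the identity $\nabla H = 0$ for an umbilical hypersurface is a one-line consequence of Codazzi because the ambient component $R^N(X, Y, Z, \xi)$ with $X, Y, Z$ tangent vanishes automatically; in the Berger 3-sphere this component does \emph{not} vanish, so one must expand $R^N$ in the Hopf frame (\ref{g2}) and reduce through the orthogonality relations on $(a^j_k)$ to recover the clean identification $\nabla H = -(\mathrm{Ric}^N(\xi))^\top$. Once this identity is in hand the biharmonic equation supplies the cancellation $3H\,\nabla H = 0$ and the rest of the argument is essentially bookkeeping.
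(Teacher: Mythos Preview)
Your argument is correct and follows the same overall route as the paper: establish that a totally umbilical proper biharmonic surface is CMC, then invoke Theorem~\ref{Th1} and discard the Hopf torus branch. The paper's proof is two sentences long because it outsources step~(a) to the reference \cite{Ou4}, which contains the general statement that a totally umbilical biharmonic surface in any $3$-manifold has constant mean curvature; it also leaves the exclusion of the Hopf torus implicit. Your proposal, by contrast, supplies a self-contained derivation of (a) via Codazzi, obtaining the identity $\nabla H=-(\mathrm{Ric}^N(\xi))^{\top}$ (which, incidentally, is a general $3$-dimensional fact coming from the decomposition $R(e_1,e_2)\xi=\mathrm{Ric}(e_2,\xi)e_1-\mathrm{Ric}(e_1,\xi)e_2$, so you need not expand in the Hopf frame at all), and then feeds it into the tangential biharmonic equation to get $H\nabla H=0$. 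You also make step~(b) explicit by reading off $h(e_1,e_2)=\varepsilon\neq 0$ from (\ref{th4}). Both additions are sound; the trade-off is that the paper's version is terse and relies on an external result, while yours is longer but self-contained and more transparent about why the Hopf torus is ruled out.
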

\begin{proof}
It follows  from a result in \cite{Ou4} that a totally umbilical biharmonic surface in 3-dimensional Riemannian manifolds must have constant mean
curvature $H$.  This, together with Theorem \ref{Th1}, implies that the only potential totally umbilical proper
biharmonic surface is a part of $S^2(1/\sqrt{2})$ in $S^3$.
\end{proof}
Since $\varepsilon^2=1$, we see that a potential Berger 3-sphere $S^3_\varepsilon$ has to be 3-sphere $S^3$. Applying Corollary \ref{coro1}, we get
\begin{corollary}\label{coro2}
 A totally umbilical  surface in a Berger 3-sphere  $S^3_\varepsilon$ with $\varepsilon^2\neq1$ is biharmonic if and only if it is minimal.
\end{corollary}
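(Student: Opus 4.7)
The plan is to obtain Corollary \ref{coro2} as an essentially immediate consequence of Corollary \ref{coro1} by a contrapositive argument, using the basic dichotomy that every biharmonic isometric immersion is either minimal (harmonic) or proper biharmonic.

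First I would record the ``if'' direction, which is trivial: any minimal isometric immersion is harmonic, hence automatically biharmonic, and this requires no hypothesis on $\varepsilon$. So the content of the statement is entirely in the ``only if'' direction.

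For the ``only if'' direction, I would argue by contradiction. Suppose $\varphi : \Sigma \to S^3_\varepsilon$ is a totally umbilical biharmonic isometric immersion with $\varepsilon^2 \neq 1$, and suppose furthermore that $\varphi$ is not minimal. Then $\varphi$ is \emph{proper} biharmonic and totally umbilical, so Corollary \ref{coro1} forces $\varphi(\Sigma)$ to be (a part of) the small sphere $S^2(1/\sqrt{2})$ sitting inside $S^3$, i.e.\ inside the standard round 3-sphere corresponding to $\varepsilon^2 = 1$. But the ambient Berger sphere is $S^3_\varepsilon$ with $\varepsilon^2 \neq 1$, and $S^2(1/\sqrt{2}) \subset S^3$ appears as a proper biharmonic surface only when the ambient metric is the standard one. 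This contradicts $\varepsilon^2 \neq 1$ and completes the argument.

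There is no genuine obstacle here: the only point to be careful about is to phrase Corollary \ref{coro1} correctly, namely that its conclusion identifies both the surface \emph{and} the ambient geometry ($S^3 = S^3_{\pm 1}$), so the hypothesis $\varepsilon^2 \neq 1$ rules out the proper biharmonic alternative rather than rules out a particular shape of the surface. Once this is observed, the corollary follows in a couple of lines without any further computation with the biharmonic equations \eqref{BHEq} or with the frame calculations of Theorem \ref{Th1}.
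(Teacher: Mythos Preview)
Your proposal is correct and follows essentially the same approach as the paper: the paper's proof is a single sentence noting that the only case in Corollary~\ref{coro1} occurs for $\varepsilon^2=1$, so for $\varepsilon^2\neq 1$ no proper biharmonic totally umbilical surface exists, which is exactly your contrapositive argument.
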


\section{Biharmonic Riemannian submersions from  a Berger 3-sphere $S^3_\varepsilon$}
As Riemannian submersions can be considered as the dual notion  of isometric immersions, it would be interesting to study biharmonic Riemannian submersions. In a recent paper \cite{WO1}, the authors classified all proper Riemannian submersions  from BCV 3-diemnsional spaces into a surface, and proved that  a proper biharmonic Riemannian submersion from a BCV 3-diemnsional space exists only in  $H^2\times\r\to \r^2$,\;or,  $\widetilde{SL}(2,\r)\to \r^2$  of which  some examples were given. In this section, we  give a complete classification of biharmonic Riemannian submersions from a Berger 3-sphere into a surface. \\

Let $\pi:S^3_{\varepsilon} \to (N^2,h)$ be a Riemannian
submersion from  Berger 3-sphere  $S^3_{\varepsilon}$  with an
orthonormal frame  $\{e_1,\; e_2, \;e_3\}$ and  $e_3$ being vertical. Then, we have the following (\ref{R1})-(\ref{GCB1})(see \cite{WO1} for details)
\begin{equation}\label{R1}
\begin{array}{lll}
[e_1,e_3]=f_{3}e_2+\kappa_1e_3,
[e_2,e_3]=-f_{3}e_1+\kappa_2e_3,
[e_1,e_2]=f_1 e_1+f_2e_2-2\sigma e_3,
\end{array}
\end{equation}
where $\{f_1, f_2, f_3,\;\kappa_1,\;\kappa_2,\; \sigma\}
$ is the (generalized) integrability
data of the Riemannian submersion $\pi$.  The frame  $\{e_1,\; e_2, \;e_3\}$ is adapted  to Riemannian
submersion if and only if $f_{3}=0$ holds,  and hence $ \{f_1, f_2,
\kappa_1,\;\kappa_2,\;\sigma\}$ is called the integrability
data  of the adapted frame of the Riemannian submersion $\pi$.  \\
The Levi-Civita connection for the frame  $\{e_1,\; e_2, \;e_3\}$ given by
\begin{equation}\label{R2}
\begin{array}{lll}
\nabla_{e_{1}} e_{1}=-f_1e_2,\;\;\nabla_{e_{1}} e_{2}=f_1
e_1-\sigma e_{3},\;\;\nabla_{e_{1}} e_{3}=\sigma
e_{2},\\\nabla_{e_{2}} e_{1}=-f_2 e_{2}+\sigma
e_3,\;\;\nabla_{e_{2}} e_{2}=f_2 e_{1}, \;\;\nabla_{e_{2}}
e_{3}=-\sigma e_{1},\\ \nabla_{e_{3}}
e_{1}=-\kappa_1e_{3}+(\sigma-f_{3}) e_{2}, \nabla_{e_{3}} e_{2}= -(\sigma-f_{3})
e_{1}-\kappa_2 e_3, \nabla_{e_{3}} e_{3}=\kappa_1 e_{1}+\kappa_2
e_2,
\end{array}
\end{equation}

 the Jacobi identities as
 \begin{equation}\label{Jac}
 \begin{cases}
e_3(f_1)+(\kappa_1+f_2)f_{3}-e_1(f_{3})=0,\\
e_3(f_2)+(\kappa_2-f_1)f_{3}-e_2(f_{3})=0,\\
2 e_3(\sigma)+\kappa_1f_1+\kappa_2f_2+e_2(\kappa_1)-e_1(\kappa_2)=0,
\end{cases}
\end{equation}
and if denoting by $e_i=\sum\limits_{j=1}^3a_{i}^{j}E_j,\;i=1,2,3$,  using (\ref{g1}), (\ref{g2})  and (\ref{R2}), then we have
\begin{equation}\label{RC}
\begin{cases}
R^{M}(e_1,e_3,e_1,e_2)=-e_1(\sigma)+2\kappa_1\sigma=-a_{2}^{3}a_{3}^{3}R,\\
R^{M}(e_1,e_3,e_1,e_3)=e_1(\kappa_1)+\sigma^2-\kappa_{1}^2+\kappa_2f_1=(a_{2}^{3})^2R+\varepsilon^2,\;\\
R^{M}(e_1,e_3,e_2,e_3)=e_1(\kappa_2)-e_3(\sigma)-\kappa_{1}f_{1}-\kappa_1\kappa_2=-a_{1}^{3}a_{2}^{3}R,\;\\
R^{M}(e_1,e_2,e_1,e_2)=e_1(f_2)-e_2(f_1)-f_{1}^{2}-f_{2}^{2}+2f_{3}\sigma-3\sigma^2=(a_{3}^{3})^2R+\varepsilon^2,\\
R^{M}(e_1,e_2,e_2,e_3)=-e_2(\sigma)+2\kappa_2\sigma=a_{1}^{3}a_{3}^{3}R,\\
R^{M}(e_2,e_3,e_1,e_3)=e_2(\kappa_{1})+e_3(\sigma)+\kappa_2 f_2-\kappa_1 \kappa_2=-a_{1}^{3}a_{2}^{3}R,\\
R^{M}(e_2,e_3,e_2,e_3)=\sigma^{2}+e_2(\kappa_2)-\kappa_1f_2- \kappa_2^2=(a_{1}^{3})^2R+\varepsilon^2,
\end{cases}
\end{equation}
where $R=4-4\varepsilon^2$.\\
 Gauss curvature of the base space is given by

\begin{equation}\label{GCB}
K^N=e_1(f_2)-e_2(f_1)-f_1^2-f_2^2+2f_{3}\sigma.
\end{equation}
Clearly,
\begin{equation}\label{GCB0}
e_3(K^N)=e_3\{e_1(f_2)-e_2(f_1)-f_1^2-f_2^2+2f_{3}\sigma\}=0,
\end{equation}
when $f_{3}=0$,  then Gauss curvature of the base space turns into

\begin{equation}\label{GCB1}
K^N=e_1(f_2)-e_2(f_1)-f_1^2-f_2^2.
\end{equation}

We state biharmonic equation for Riemannian submersion from 3-manifolds  which will be later used  in the rest of
this paper.
\begin{lemma}(\cite{WO})\label{Lem1}
Let $\pi:(M^3,g)\to(N^2,h)$ be a Riemannian submersion
with the adapted frame $\{e_1,\; e_2, \;e_3\}$ and the integrability
data $ \{f_1, f_2, \kappa_1,\;\kappa_2,\; \sigma\}$. Then, the
Riemannian submersion $\pi$ is biharmonic if and only if
\begin{equation}\label{lem1}
\begin{cases}
-\Delta^{M}\kappa_1-2\sum\limits_{i=1}^{2}f_i e_i(\kappa_2)-\kappa_2\sum\limits_{i=1}^{2}\left(e_i( f_i)
-\kappa_i f_i\right)+\kappa_1\left(-K^{N}+\sum\limits_{i=1}^{2}f_{i}^{2}\right)
=0,\\
-\Delta^{M}\kappa_2+2\sum\limits_{i=1}^{2}f_i e_i(\kappa_1)+\kappa_1\sum\limits_{i=1}^{2}(e_i( f_i)
-\kappa_i f_i)+\kappa_2\left(-K^{N}+\sum\limits_{i=1}^{2}f_{i}^{2}\right)=0,
\end{cases}
\end{equation}
where
$K^{N}=R^{N}_{1212}\circ\pi=e_1(f_2)-e_2(f_1)-f_{1}^{2}-f_{2}^{2}
$ is  Gauss curvature of Riemannian manifold $(N^2,h)$.
\end{lemma}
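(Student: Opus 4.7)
The plan is to derive the bitension field $\tau^{2}(\pi)$ directly from its definition and read off its two horizontal components. First I would compute the tension field. For a Riemannian submersion with one-dimensional vertical distribution spanned by $e_{3}$, the identity $\tau(\pi)=-d\pi(\mu_{V})$, combined with $\mu_{V}=\nabla^{M}_{e_{3}}e_{3}=\kappa_{1}e_{1}+\kappa_{2}e_{2}$ from (\ref{R2}), gives
\begin{equation*}
\tau(\pi) = -(\kappa_{1}\bar{e}_{1}+\kappa_{2}\bar{e}_{2})\circ\pi,
\end{equation*}
where $\bar{e}_{i}=d\pi(e_{i})$ for $i=1,2$. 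Since $f_{3}=0$, the Jacobi identities (\ref{Jac}) force $e_{3}(f_{1})=e_{3}(f_{2})=0$, so $\bar{e}_{1},\bar{e}_{2}$ descend to a well-defined orthonormal frame on $(N^{2},h)$ and $f_{1},f_{2}$ are basic.

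Next I would evaluate the rough Laplacian term $\mathrm{Trace}_{g}(\nabla^{\pi}\nabla^{\pi}-\nabla^{\pi}_{\nabla^{M}})\tau(\pi)$. The key ingredient is the O'Neill identity $\nabla^{N}_{\bar{X}}\bar{Y}=d\pi\bigl((\nabla^{M}_{X}Y)^{H}\bigr)$, which applied to the horizontal parts of (\ref{R2}) yields
\begin{equation*}
\nabla^{N}_{\bar{e}_{1}}\bar{e}_{1}=-f_{1}\bar{e}_{2},\;\; \nabla^{N}_{\bar{e}_{1}}\bar{e}_{2}=f_{1}\bar{e}_{1},\;\; \nabla^{N}_{\bar{e}_{2}}\bar{e}_{1}=-f_{2}\bar{e}_{2},\;\; \nabla^{N}_{\bar{e}_{2}}\bar{e}_{2}=f_{2}\bar{e}_{1}.
\end{equation*}
Writing $E_{j}=\bar{e}_{j}\circ\pi$ and expanding $\nabla^{\pi}_{e_{i}}\nabla^{\pi}_{e_{i}}(\kappa_{j}E_{j})$ by the Leibniz rule, each $i$ contributes a scalar part, a Leibniz cross term $2e_{i}(\kappa_{j})\nabla^{\pi}_{e_{i}}E_{j}$, and a term $\kappa_{j}(\nabla^{\pi}_{e_{i}}\nabla^{\pi}_{e_{i}}-\nabla^{\pi}_{\nabla^{M}_{e_{i}}e_{i}})E_{j}$. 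Summing over $i=1,2,3$ (the $i=3$ slot enters only via $\nabla^{M}_{e_{3}}e_{3}=\kappa_{1}e_{1}+\kappa_{2}e_{2}$, since $d\pi(e_{3})=0$) and reading off the $E_{1}$- and $E_{2}$-coefficients produces the combinations $\Delta^{M}\kappa_{k}$, $f_{i}e_{i}(\kappa_{\ell})$, $\kappa_{k}\sum f_{i}^{2}$, and $\kappa_{\ell}\sum(e_{i}(f_{i})-\kappa_{i}f_{i})$.

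For the curvature term, using that $N^{2}$ is a surface so $R^{N}(X,Y)Z=K^{N}\bigl(h(Y,Z)X-h(X,Z)Y\bigr)$, a direct evaluation gives $\sum_{i=1}^{2}R^{N}(\bar{e}_{i},\tau(\pi))\bar{e}_{i}=-K^{N}\tau(\pi)$. Combining the two traces, $\tau^{2}(\pi)=0$ decomposes into two scalar equations along $E_{1}$ and $E_{2}$, which are precisely the two equations stated in the lemma.

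The main obstacle is the careful bookkeeping of all first-order terms. The coefficient $2$ in $2\sum_{i=1}^{2}f_{i}e_{i}(\kappa_{j})$ arises from the Leibniz middle term $2e_{i}(\kappa_{j})\nabla^{\pi}_{e_{i}}E_{j}$ contracted with the O'Neill coefficients $\pm f_{i}$. The quadratic block $\kappa_{\ell}\sum(e_{i}(f_{i})-\kappa_{i}f_{i})$ emerges from combining the $i=1,2$ parts of $(\nabla^{\pi}_{e_{i}}\nabla^{\pi}_{e_{i}}-\nabla^{\pi}_{\nabla^{M}_{e_{i}}e_{i}})E_{\ell}$ with the $i=3$ contribution $-\nabla^{\pi}_{\kappa_{1}e_{1}+\kappa_{2}e_{2}}E_{\ell}$, after which the Gauss relation (\ref{GCB1}) is used to collect the residual $e_{i}(f_{i})-f_{i}^{2}$ pieces into the advertised $-K^{N}+\sum f_{i}^{2}$ coefficient of $\kappa_{k}$. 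The vertical component of $\tau^{2}(\pi)$ vanishes automatically since $\tau(\pi)$ lies in the image of $d\pi$.
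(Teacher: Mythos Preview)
The paper does not actually prove this lemma; it is quoted verbatim from \cite{WO} and used as a black box. So there is no ``paper's own proof'' to compare against here. Your outline is the standard direct computation and is essentially what one finds in the original reference: compute $\tau(\pi)=-d\pi(\nabla^{M}_{e_{3}}e_{3})=-(\kappa_{1}\bar e_{1}+\kappa_{2}\bar e_{2})\circ\pi$, plug into the bitension formula, use O'Neill to rewrite $\nabla^{\pi}_{e_{i}}(\bar e_{j}\circ\pi)$ in terms of $f_{1},f_{2}$, and use the surface curvature identity $R^{N}(X,Y)Z=K^{N}(h(Y,Z)X-h(X,Z)Y)$ to reduce the curvature trace to $-K^{N}\tau(\pi)$. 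The bookkeeping you describe for the coefficients $2\sum f_{i}e_{i}(\kappa_{j})$ and $\sum(e_{i}(f_{i})-\kappa_{i}f_{i})$ is correct in outline.

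Two small remarks. First, your use of the symbols $E_{1},E_{2}$ for $\bar e_{1}\circ\pi,\bar e_{2}\circ\pi$ clashes with the paper's global frame $E_{1},E_{2},E_{3}$ on $S^{3}_{\varepsilon}$; pick different letters. Second, your closing sentence about the ``vertical component of $\tau^{2}(\pi)$'' is slightly off: $\tau^{2}(\pi)$ is a section of $\pi^{*}TN$, so there is no vertical component to speak of---the point is simply that $\pi^{*}TN$ is spanned by $\bar e_{1}\circ\pi$ and $\bar e_{2}\circ\pi$, giving exactly two scalar equations.
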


\begin{proposition}(see \cite{WO1})\label{L1}
Let $\pi:(M^3,g)\longrightarrow (N^2,h)$ be a Riemannian submersion
from 3-manifolds  with  an orthonormal frame $\{e_1, e_2, e_3\}$ and
$ e_3$ being vertical. If $\nabla_{e_{1}}e_{1}=0$, then either  $\nabla_{e_{2}}e_{2}=0$;\;or, $\nabla_{e_{2}}e_{2}\not\equiv0$,
and the frame $\{e_1, e_2, e_3\}$  is  adapted to the Riemannian submersion $\pi$.
\end{proposition}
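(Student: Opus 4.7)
The hypothesis and conclusion can be rewritten purely in terms of the integrability data. From the Levi-Civita formulas (\ref{R2}), $\nabla_{e_1}e_1 = -f_1\, e_2$ and $\nabla_{e_2}e_2 = f_2\, e_1$, so $\nabla_{e_1}e_1 = 0$ is equivalent to $f_1 \equiv 0$, while $\nabla_{e_2}e_2 \equiv 0$ is equivalent to $f_2 \equiv 0$; the frame $\{e_1, e_2, e_3\}$ is adapted to $\pi$ precisely when $f_3 \equiv 0$. Hence the proposition is equivalent to the algebraic implication: \emph{if $f_1 \equiv 0$ and $f_2 \not\equiv 0$, then $f_3 \equiv 0$ on $M$}.

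Plugging $f_1 = 0$ into the Jacobi identities (\ref{Jac}) produces
\[
e_1(f_3) = (\kappa_1 + f_2) f_3, \qquad e_2(f_3) - e_3(f_2) = \kappa_2 f_3, \qquad e_1(\kappa_2) - e_2(\kappa_1) = 2 e_3(\sigma) + \kappa_2 f_2.
\]
The first identity is a linear homogeneous ODE for $f_3$ along each integral curve of $e_1$, so $f_3$ is either identically zero or nowhere zero along any such connected orbit.

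Next I would restrict to the non-empty open set $U = \{p \in M : f_2(p) \neq 0\}$. On $U$, the identity $\nabla_{e_2}e_2 = f_2\, e_1$ rigidly determines $e_1$ as the normalized $\nabla_{e_2}e_2$, eliminating the usual rotational freedom of the horizontal frame. I would differentiate this defining equality along the vertical direction $e_3$, expand each side via (\ref{R2}), match the $e_1$-, $e_2$-, $e_3$-coefficients, and then substitute $e_3(f_2) = e_2(f_3) - \kappa_2 f_3$ from the second Jacobi identity; the resulting system should collapse to $f_2 f_3 = 0$, forcing $f_3 \equiv 0$ on $U$. I would then extend $f_3 \equiv 0$ from $U$ to all of $M$ via the ODE $e_1(f_3) = (\kappa_1 + f_2) f_3$: integral curves of $e_1$ meeting $U$ inherit the zero initial value and by uniqueness carry $f_3 = 0$ throughout, after which continuity together with connectedness of $M$ yields $f_3 \equiv 0$ globally.

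The main obstacle I expect is the rigidity computation described above: one must execute the expansion of $\nabla_{e_3}(\nabla_{e_2}e_2)$ in two compatible forms, use the Jacobi identities to eliminate $e_3(f_2)$, and verify the algebraic cancellations that produce the crucial equation $f_2 f_3 = 0$. Crucially, the argument must rely only on the structure equations (\ref{R1})--(\ref{Jac}) of a general Riemannian submersion $\pi : M^3 \to N^2$, not on any ambient curvature of $(M, g)$, consistent with the generality of the statement.
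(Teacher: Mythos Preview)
The paper does not prove this proposition; it is quoted from the companion preprint \cite{WO1} by the same authors, so there is no argument in the present paper to compare against. I can therefore only assess your outline on its own merits.

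Your reformulation is correct: the content is that $f_1\equiv 0$ together with $f_2\not\equiv 0$ forces $f_3\equiv 0$. The Jacobi identities you record with $f_1=0$ are also correct. The difficulty is in the step you label the ``rigidity computation.'' Differentiating the identity $\nabla_{e_2}e_2=f_2e_1$ along $e_3$ and ``expanding each side via (\ref{R2})'' is tautological: both sides are the same vector field, so $\nabla_{e_3}$ applied to either one yields
\[
e_3(f_2)\,e_1+f_2\bigl((\sigma-f_3)e_2-\kappa_1 e_3\bigr),
\]
and nothing collapses to $f_2f_3=0$. If instead you compute $\nabla_{e_3}\nabla_{e_2}e_2$ through the curvature identity $\nabla_{e_3}\nabla_{e_2}e_2=\nabla_{e_2}\nabla_{e_3}e_2+\nabla_{[e_3,e_2]}e_2+R(e_3,e_2)e_2$, the Riemann tensor of $(M,g)$ enters, contrary to the restriction you impose at the end of your outline.

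There is also a conceptual gap. The relations (\ref{R1})--(\ref{Jac}) hold for \emph{any} orthonormal frame on a $3$-manifold with $e_3$ singled out; they do not by themselves encode that $\pi$ is a Riemannian submersion. One can write down data with $f_1=0$ and $f_2f_3\ne 0$ satisfying (\ref{Jac}), so the implication you need cannot be purely algebraic in those equations. The submersion hypothesis enters through the existence of a \emph{basic} horizontal frame $\{\tilde e_1,\tilde e_2\}$: writing $e_1=\cos\theta\,\tilde e_1+\sin\theta\,\tilde e_2$ one finds $f_3=-e_3(\theta)$, and crucially $e_3(\tilde f_1)=e_3(\tilde f_2)=0$. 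Any valid proof must exploit this basicness (or an equivalent formulation such as $e_3(K^N)=0$); your sketch never does. Finally, even granting $f_3=0$ on $U=\{f_2\ne 0\}$, the extension argument via the ODE $e_1(f_3)=(\kappa_1+f_2)f_3$ only propagates zeros along $e_1$-integral curves that actually meet $U$; connectedness of $M$ alone does not guarantee every point is reached this way, so that step also needs more care.
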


  We use Proposition \ref{L1} to  prove the following important consequence  which is used proving our main theorem

\begin{theorem}\label{Cla}
Let $\pi:S_{\varepsilon}^3 \to (N^2,h)$ be a Riemannian submersion
from Berger 3-sphere with  $R=4-4\varepsilon^2\neq0$. Then, we can choose  an adapted  frame  is of the form $\{e_1=a_1^1E_1+a_1^2E_2,\; e_2,\;e_3\}$ to the Riemannian
submersion $\pi$ with $e_3$ being vertical. Moreover, if $E_3$ is not vertical, then $\nabla_{e_2}e_2\neq0$, i.e.,\;$f_2\neq0$.
\end{theorem}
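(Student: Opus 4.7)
The plan is to construct the required frame explicitly from $e_3$ and then to deduce adaptedness together with the ``moreover'' clause simultaneously, by combining Proposition~\ref{L1} with the curvature identities~(\ref{RC}). Writing a local unit vertical vector field as $e_3 = pE_1 + qE_2 + rE_3$ (so $p^2+q^2+r^2 = 1$) and setting $s := \sqrt{p^2+q^2}$, the hypothesis that $E_3$ is not vertical gives $s > 0$. Hence the horizontal plane $e_3^{\perp}$ meets $\mathrm{span}\{E_1, E_2\}$ in the single line $\mathrm{span}\{qE_1 - pE_2\}$, and I set
$$e_1 \;=\; \frac{qE_1 - pE_2}{s}, \qquad e_2 \;=\; e_3 \times e_1,$$
obtaining an orthonormal frame with $e_3$ vertical, $e_1 \in \mathrm{span}\{E_1, E_2\}$, and $a_2^3 = -s \ne 0$.

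To verify adaptedness I would first establish $\nabla_{e_1}e_1 = 0$. Writing $e_1 = aE_1 + bE_2$ and applying the connection formulas (\ref{g1}), a short computation shows that the $E_3$-terms cancel pairwise, leaving $\nabla_{e_1}e_1 = e_1(a) E_1 + e_1(b) E_2$, a vector with zero $E_3$-component. On the other hand, the formulas (\ref{R2}) (which follow from Koszul and the integrability data (\ref{R1}) in any orthonormal frame with $e_3$ vertical) force $\nabla_{e_1}e_1 = -f_1 e_2$, whose $E_3$-component equals $f_1 s$. Matching the two expressions yields $f_1 s = 0$, hence $f_1 = 0$ and $\nabla_{e_1}e_1 = 0$. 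Proposition~\ref{L1} then supplies the dichotomy: either the frame is already adapted, or $\nabla_{e_2}e_2 = 0$.

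The remainder of the argument rules out the second alternative (and thereby also proves the moreover clause). Suppose toward contradiction that $\nabla_{e_2}e_2 = 0$, so $f_2 = 0$. Substituting $f_1 = f_2 = 0$ together with $a_1^3 = 0$, $a_2^3 = -s$, $a_3^3 = r$ into Equation~4 of (\ref{RC}) gives
$$2 f_3 \sigma - 3\sigma^2 \;=\; r^2 R + \varepsilon^2.$$
In the adapted sub-case $f_3 = 0$: when $\varepsilon^2 < 1$ the right-hand side is strictly positive while the left is $-3\sigma^2 \le 0$, an immediate contradiction; when $\varepsilon^2 > 1$, I differentiate along $e_2$ using the ODE $e_2(r) = 0$ (read off from the vanishing of the $E_3$-component of $\nabla_{e_2}e_2$) and combine with Equations~1, 5 and 7 of (\ref{RC}) to conclude either $\kappa_2 = 0$ with $\sigma^2 = \varepsilon^2$, forcing $r^2 = \varepsilon^2/(\varepsilon^2-1) > 1$, or $\sigma = 0$, contradicting Equation~1 which reads $srR = 0$. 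The non-adapted sub-case ($f_3 \ne 0$) is handled analogously by chaining the displayed identity with Equations~1, 2, 5, 7 of (\ref{RC}) and the Jacobi identities (\ref{Jac}); the bookkeeping typically forces $\varepsilon^2 = 1$ or $\varepsilon = 0$, both forbidden by $R \neq 0$. Hence $\nabla_{e_2}e_2 \ne 0$, and by Proposition~\ref{L1} the frame is adapted, while $f_2 \ne 0$ is the moreover assertion.

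The main obstacle I anticipate is the uniform treatment of all $\varepsilon$ with $R \neq 0$ in the contradiction argument: the regime $\varepsilon^2 < 1$ dispatches via a single sign inequality from Equation~4 of (\ref{RC}), but the regime $\varepsilon^2 > 1$ together with the non-adapted sub-case requires simultaneously exploiting several of the curvature identities (\ref{RC}), the Jacobi identities (\ref{Jac}), and the ODE consequences of $\nabla_{e_2}e_2 = 0$.
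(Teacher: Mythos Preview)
Your overall architecture matches the paper's: the same cross-product construction of $e_1$, the same $E_3$-component trick to force $f_1=0$, the same appeal to Proposition~\ref{L1}, and then a contradiction argument to eliminate $f_2=0$. The differences lie in how the elimination of $f_2=0$ is organized, and here there is one nice streamlining and one genuine gap.

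\textbf{What works and is sharper than the paper.} In the adapted sub-case $f_3=0$, your sign argument from the fourth equation of~(\ref{RC}) when $\varepsilon^2<1$ is a clean shortcut. For $\varepsilon^2>1$ your argument is essentially correct too: differentiating $-3\sigma^2=r^2R+\varepsilon^2$ along $e_2$ and combining with the fifth equation of~(\ref{RC}) gives $\kappa_2\sigma^2=0$; if $\kappa_2=0$ the seventh equation yields $\sigma^2=\varepsilon^2$ and hence $r^2=\varepsilon^2/(\varepsilon^2-1)>1$; if $\sigma=0$ then the first equation forces $r=0$, and feeding this back into the fourth equation gives $0=\varepsilon^2$. (Your phrasing of this last branch is slightly off---the contradiction is not ``$srR=0$'' itself but what it implies after one more substitution.) One correction: $e_2(r)=0$ does \emph{not} come from the $E_3$-component of $\nabla_{e_2}e_2$ (that computation involves $a_2^3$, not $a_3^3=r$); it follows instead from comparing the two expressions for $\nabla_{e_2}e_3$, exactly as in the derivation of the system~(\ref{thb2}).

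\textbf{The gap.} The non-adapted sub-case $f_3\neq0$ is not established. ``The bookkeeping typically forces $\varepsilon^2=1$ or $\varepsilon=0$'' is an expectation, not an argument, and you yourself flag it as the main obstacle. The paper avoids this difficulty by not splitting on $f_3$ at all: it first records the full system~(\ref{thb2}) (obtained by comparing $\nabla_{e_i}e_j$ computed via~(\ref{g1}) against~(\ref{R2}) for all $i,j$), and then splits on whether $a_3^3=0$ or not. The eighth equation of~(\ref{thb2}), namely $f_2\,a_2^3=(\sigma+\varepsilon)a_3^3$, immediately gives $\sigma=-\varepsilon$ when $f_2=0$ and $a_3^3\neq0$, and the seventh equation then controls $f_3$ and $\kappa_1$. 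With these in hand the contradiction in either sub-case is short. Your scheme, lacking~(\ref{thb2}), has no direct handle on the interaction between $f_3$, $\sigma$, and the frame coefficients; this is precisely why the $f_3\neq0$ branch resists the ``analogous chaining'' you propose. I would recommend deriving~(\ref{thb2}) first and then following the paper's $a_3^3$-based split, rather than the $f_3$-based one.
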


\begin{proof}
It is observed that if  the vertical vector field $ E_3$  is tangent to the fiber of the  Riemannian
submersion $\pi$, then any  basic vector field is of the form $e=a^2E_1+b^2E_2,\;{\rm and}\;a^2+b^2=1$.\\

From this time now, we  just need  to suppose  that the vertical vector field $ e_3 $ is not parallel to $E_3$. Then, the vector filed $e_1=e_3\times E_3$  is  horizontal  and
 hence $\langle e_1, E_3\rangle=0$. From this, a defined orthonormal frame $\{e_1, \;e_2=e_3\times e_1,\;e_3\}$ on $M^3$ is obtained.
If  denoting by $e_i=\sum\limits_{j=1}^{3}a_i^jE_j, i=1,2,3$, together with $\langle e_1, E_3\rangle=0$, then the vector horizontal filed $e_1$  is of the form $e_1=a_1^1E_1+a_1^2E_2$ and hence \;$(a_1^1)^2+(a_1^2)^2=1$. From these, we have the following
\begin{equation}\label{zb}
\begin{array}{lll}
a_1^3=0,\;a_3^{3}\neq\pm1\;{\rm and}\; a_2^{3}\neq0.
\end{array}
\end{equation}
Moreover,  one can also get the following equalities  as
\begin{equation}\label{bb4}
 f_1=0,\;\nabla_{e_1}e_1=0.
\end{equation}
Indeed, we compute
\begin{equation}\label{bb2}
\begin{array}{lll}
\nabla_{e_{1}} e_{1}=\nabla_{e_{1}}(\sum\limits_{i=1}^{3}a_1^iE_i)
=\sum\limits_{i=1}^{3}e_1(a_1^i)E_i+\sum\limits_{i,j=1}^{3}a_1^ja_1^i\nabla_{E_j}E_i.
\end{array}
\end{equation}
In addition, one uses (\ref{R2}) to see that the above has another expression as
\begin{equation}\label{bb1}
\nabla_{e_{1}} e_{1}=-f_1e_{2}=-f_1\sum\limits_{i=1}^{3}a_2^iE_i.
\end{equation}

We equate  (\ref{bb2}) and (\ref{bb1}) and  compare the coefficient of $E_3$ to obatin
 \begin{equation}\label{bb3}
\begin{array}{lll}
-f_1 a_2^3=\langle-f_1\sum\limits_{i=1}^{3}a_1^iE_i,E_3\rangle=\langle\nabla_{e_{1}} e_{1},E_3\rangle\\
=\langle\sum\limits_{i=1}^{3}e_1(a_1^i)E_i+\sum\limits_{i,j=1}^{3}a_1^ja_1^i\nabla_{E_j}E_i,E_3\rangle=e_1(a_1^3)=0,
\end{array}
\end{equation}
which has been used (\ref{g1}) and $a_1^3=0$. This follows  $f_1=0$ for $a_2^3\neq0$, from which we get (\ref{bb4}).\\

 Using (\ref{g1}), (\ref{R2}) and $a_1^3=f_1=0$,  a further calculation analogous  to  those used  computing  (\ref{bb2})--(\ref{bb3}) yields

\begin{equation}\label{thb2}
\begin{cases}
e_1(a_{2}^{3})=-(\sigma+\varepsilon)a_{3}^{3},\\
e_1(a_{3}^{3})=(\sigma+\varepsilon)a_{2}^{3},\\
e_2(a_{2}^{3})=0,\\
e_2(a_{3}^{3})=0,\\
e_3(a_{2}^{3})=-\kappa_2a_{3}^{3},\\
e_3(a_{3}^{3})=\kappa_2a_{2}^{3},\\
\kappa_1a_{3}^{3}=(\sigma-\varepsilon-f_{3})a_{2}^{3},\\
f_2a_{2}^{3}=(\sigma+\varepsilon)a_{3}^{3}.
\end{cases}
\end{equation}

Since $\nabla_{e_{1}}e_{1}=0$,  one concludes from Proposition \ref{L1} to have either $\nabla_{e_{2}}e_{2}\neq0$,  and the
frame $\{e_1, e_2, e_3\}$  is an adapted  to the Riemannian submersion $\pi$, or \;$\nabla_{e_{2}}e_{2}=0$. Now, we just need to consider the latter case
$\nabla_{e_{2}}e_{2}=0$,  i.e., $f_2=0$.  Combining these, one has  the following
\begin{equation}\label{zb1}
a_1^3=f_1=f_2=0,\;a_3^3\neq\pm1\; {\rm and}\;a_2^3\neq0.
\end{equation}

We now show that the above case (i.e., $a_1^3=f_1=f_2=0$, $a_3^3\neq\pm1$,\;$a_2^3\neq0$ ) can not happen by considering the following two cases:\\

Case I: $a_3^3=0$ and $a_1^3=f_1=f_2=0$. One shows that the case can not happen.\\

In this case, since $a_1^3=0$, we have $a_2^3=\pm1$.  We substitute $a_2^3=\pm1$ and $a_3^3=0$  into  the 2nd  equation of (\ref{thb2}) separately to obtain $\sigma=-\varepsilon$ and hence $f_3=\sigma-\varepsilon=-2\varepsilon$. Substituting these and $f_1=f_2=0$ into the 1st and the 2nd equation of (\ref{Jac}), we get $\kappa_1=\kappa_2=0$. From these and using  the 2nd equation of  (\ref{RC}), we get  $a_2^3=0$, a contradiction.\\

Case II: $a_3^3\neq0,\pm1$, $a_2^3\neq0,\pm1$ and $a_1^3=f_1=f_2=0$. We will show that the case can not happen, either.\\
 In this case, substituting $f_2=0$ into the 8th equation of  (\ref{thb2}), we obtain  $\sigma=-\varepsilon$. Then, we apply the 5th equation of  (\ref{RC}), the 1st  and the 2nd equation of (\ref{thb2}) separately to obtain $\kappa_2=0$, $e_1(a_2^3)=e_1(a_3^3)=0$, and hence $e_3(a_2^3)=e_3(a_3^3)=0$ by using the 5th  and the 6th equation of (\ref{thb2}). Combing these and using the 3rd equation and the 4th equation  of (\ref{thb2}), we find $a_2^3, a_3^3$ to be constants. On the other hand, we must have $\kappa_1\neq0$. If otherwise, i.e., $\kappa_1=0$, we then substitute $a_1^3=0=\kappa_1=\kappa_2=0$ and $\sigma=-\varepsilon$ into the 2nd equation of (\ref{RC})
 to $a_2^3=0$ and hence $a_3^3=\pm1$,  a contradiction. Therefore, combining these and using the 1st and the 4th equation of (\ref{RC}), we  deduce  $\kappa_1 $ and $f_3$ being constants. Substituting this and $f_1=f_2=0$ into the 1st equation of the Jacobi identities (\ref{Jac}), we deduce $0=e_1(f_3)=\kappa_1f_3$  meaning $f_3=0$. However, using the 4th equation of (\ref{RC}),  $\sigma=-\varepsilon\neq0$ and $f_1=f_2=f_3=0$, one finds $(a_3^3)^2=\frac{-4\varepsilon^2}{R}=\frac{4\varepsilon^2}{4\varepsilon^2-4}$ implying $4\varepsilon^2-4>0$ and hence $(a_3^3)^2>1$, a contradiction.\\
 Combining Case I and Case II, the case $a_1^3=f_1=f_2=0$ and $a_3^3\neq\pm1$ can not happen.\\  Summarizing all results, we obtain the theorem.

\end{proof}
\begin{remark}\label{r1}
Let $\pi:S^3_{\varepsilon}\to (N^2,h)$ be a Riemannian submersion from a Berger 3-sphere with  $e_3$ being  vertical. Then, we can conclude the following facts:\\

$(a)$: If $R=4-4\varepsilon^2=0$,  then the Berger 3-sphere is
a standard sphere $S^3$.  It is a fact from Theorem 3.3  in \cite{WO}  that biharmonic Riemannian submersion
$\pi:S^3\to (N^2,h)$ has to be harmonic. Actually, the biharmonic Riemannian submersion can be expressed as the Riemannian submersion
$\pi:S^3\to S^2(\frac{1}{2})$, $\psi(z, w) = \frac{1}{2}(2zw, |z|^2 - |w|^2)$, where $z = x^1 + ix^2$, $w = x^3 + ix^4$, up to equivalence. \\

$(b)$: If \;$a_3^3=\pm1$, i.e., the vertical vector field $ e_3$ is parallel to $ E_3$, then it is not difficult to see  from (\ref{g1}) that the tension of the Riemannian  submersion  $\tau(\pi)=- d\pi(\nabla^{M}_{E_3}E_3)=0$, i.e.,\;$\pi$ is harmonic. Moreover, the biharmonic Riemannian submersion can be represented  as the Riemannian submersion $\pi:S^3_{\varepsilon}\to S^2(\frac{1}{2})$, $\psi(z, w) = \frac{1}{2}(2zw, |z|^2 - |w|^2)$, where $z = x^1 + ix^2$, $w = x^3 + ix^4$, up to equivalence.
\end{remark}
\begin{remark}\label{r2}
Let $\pi:S^3_{\varepsilon}\to (N^2,h)$ be a Riemannian submersion from a Berger 3-sphere with  $e_3$ being  vertical. If $ E_3$ is not vertical (i.e.,  \;$a_3^3\neq\pm1$) and $R=4-4\varepsilon^2\neq0$, then it follows  from Theorem \ref{Cla} that there exists such an orthonormal frame $\{e_1=a_1^1E_1+a_1^2E_2, e_2, e_3\}$   adapted to the Riemannian submersion $\pi$, and  $a_1^3=f_1=f_3=0$, $f_2\neq0$ and $a_2^3\neq0$.
\end{remark}

Now, we will prove our main results as follows

\begin{theorem}\label{Ths1}
A Riemannian submersion $\pi:S^3_\varepsilon\to (N^2,h)$ from a Berger 3-sphere to a surface
 is biharmonic if and only if it is harmonic.
\end{theorem}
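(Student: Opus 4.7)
Since $\nabla^M_{e_3}e_3=\kappa_1e_1+\kappa_2e_2$ by (\ref{R2}) and the fibre of $\pi$ is one-dimensional, the tension field is $\tau(\pi)=-\kappa_1 d\pi(e_1)-\kappa_2 d\pi(e_2)$, and $\{d\pi(e_1),d\pi(e_2)\}$ is a basis of $TN^2$; consequently $\pi$ is harmonic if and only if $\kappa_1=\kappa_2=0$. The task is therefore to deduce $\kappa_1\equiv\kappa_2\equiv 0$ from biharmonicity. The natural case split is by the value of $R=4-4\varepsilon^2$ and by whether the vertical field $e_3$ is parallel to $E_3$ (equivalently, whether $a_3^3=\pm 1$).

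The two easy cases require no new computation. If $R=0$, the Berger sphere is the round $3$-sphere and Remark~\ref{r1}(a) (which cites the classification from \cite{WO}) immediately yields harmonicity. If $R\ne 0$ but $a_3^3=\pm 1$ then $e_3=\pm E_3$, so (\ref{g1}) gives $\nabla^M_{e_3}e_3=0$ and $\tau(\pi)=0$ directly, as recorded in Remark~\ref{r1}(b). The substantive case is therefore $R\ne 0$ and $a_3^3\ne\pm 1$. Here Theorem~\ref{Cla} together with Remark~\ref{r2} supplies an adapted orthonormal frame $\{e_1=a_1^1E_1+a_1^2E_2,\,e_2,\,e_3\}$ with $a_1^3=0$, $f_1=f_3=0$, $f_2\ne 0$ and $a_2^3\ne 0$; and the algebraic relation $f_2 a_2^3=(\sigma+\varepsilon)a_3^3$ from (\ref{thb2}) then also forces $a_3^3\ne 0$, so that $(a_2^3)^2+(a_3^3)^2=1$ with both factors nonzero.

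In this frame the remaining entries of (\ref{thb2}) give $\sigma+\varepsilon=f_2a_2^3/a_3^3$ and $\sigma-\varepsilon=\kappa_1a_3^3/a_2^3$; the fourth identity in (\ref{RC}) yields $K^N=3\sigma^2+(a_3^3)^2R+\varepsilon^2$; the remaining entries of (\ref{RC}) express $e_1(\sigma),e_2(\sigma),e_1(\kappa_1),e_2(\kappa_2),e_1(\kappa_2),e_2(\kappa_1)$ as explicit polynomials in $\{\sigma,\kappa_1,\kappa_2,f_2,a_2^3,a_3^3,\varepsilon\}$; and the Jacobi identities (\ref{Jac}) are then automatically satisfied. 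With $f_1=0$ the biharmonic system (\ref{lem1}) reduces to
\begin{equation*}
\begin{cases}
\Delta\kappa_1=-2f_2e_2(\kappa_2)-\kappa_2\bigl(e_2(f_2)-\kappa_2f_2\bigr)+\kappa_1(-K^N+f_2^2),\\
\Delta\kappa_2=2f_2e_2(\kappa_1)+\kappa_1\bigl(e_2(f_2)-\kappa_2f_2\bigr)+\kappa_2(-K^N+f_2^2),
\end{cases}
\end{equation*}
so substitution of the algebraic data above converts it into an over-determined system in the finite list of scalar unknowns $\sigma,\kappa_1,\kappa_2,f_2,a_2^3,a_3^3$.

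The hard part is extracting $\kappa_1=\kappa_2=0$ from this over-determined system. My plan is to multiply the first equation by $\kappa_1$, the second by $\kappa_2$, add them, and integrate over the closed manifold $S^3_\varepsilon$; by the divergence theorem the Laplacian terms contribute $-\int(|\nabla\kappa_1|^2+|\nabla\kappa_2|^2)\,dv$, while the substituted right-hand side, after using (\ref{GCB0}) (which via $K^N=3\sigma^2+(a_3^3)^2R+\varepsilon^2$ expresses $e_3(\sigma)$ in terms of $\kappa_2$), the constraint $(a_2^3)^2+(a_3^3)^2=1$, and the cross-term cancellation produced by Jacobi identity (\ref{Jac})$_3$, should collapse to a non-positive multiple of $\kappa_1^2+\kappa_2^2$. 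This would force $|\nabla\kappa_1|^2+|\nabla\kappa_2|^2\equiv 0$, so that $\kappa_1,\kappa_2$ are constants, at which point the remaining algebraic identities evaluated pointwise force these constants to vanish. The delicate part is the bookkeeping in the cancellation of the cross-terms involving $e_2(f_2)$ and $e_3(\sigma)$ — verifying that the final sign works out globally is essentially the whole content of the proof in this case.
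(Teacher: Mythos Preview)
Your handling of the easy cases ($R=0$, or $R\ne 0$ with $a_3^3=\pm1$) matches the paper. In the substantive case $R\ne0$, $a_3^3\ne\pm1$, however, your route diverges completely from the paper's, and the proposal as written has a genuine gap.

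The paper's argument is entirely \emph{local and algebraic}. Working in the adapted frame from Theorem~\ref{Cla}, it first uses only the curvature identities (\ref{RC}), the relations (\ref{thb2}), and the Jacobi identity (\ref{Jac}) to prove (Step~1) that $\kappa_2=0$, $e_2(\kappa_1)=e_3(\kappa_1)=e_2(\sigma)=e_3(\sigma)=e_2(f_2)=0$, $\kappa_1\ne0$, and $\sigma\ne0,\pm\varepsilon$. With $\kappa_2=0$ the biharmonic system (\ref{lem1}) collapses to the single equation $\Delta\kappa_1-\kappa_1\{-e_1(f_2)+2f_2^2\}=0$. Differentiating the surviving curvature relations and eliminating variables, the paper derives a polynomial equation of degree~$7$ in $\sigma$ with constant coefficients (equation (\ref{ths36})), forcing $\sigma$ to be constant; the first equation of (\ref{thb2}) then gives $\sigma=-\varepsilon$, contradicting Step~1. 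No integration and no compactness are used.

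Your plan instead relies on a \emph{global} Bochner-type identity: multiply the two biharmonic equations by $\kappa_1,\kappa_2$, add, and integrate over $S^3_\varepsilon$. The whole argument hinges on the unverified claim that the resulting right-hand side is a non-positive multiple of $\kappa_1^2+\kappa_2^2$. You acknowledge this yourself (``verifying that the final sign works out globally is essentially the whole content of the proof''), but give no reason to expect it. If one actually substitutes the curvature identities (\ref{RC}) with $a_1^3=0$ into $2f_2(\kappa_2e_2(\kappa_1)-\kappa_1e_2(\kappa_2))$, one obtains terms such as $2\kappa_1 f_2(\sigma^2-\varepsilon^2)$ and $\dfrac{2f_2 a_2^3 a_3^3 R}{3\sigma}\kappa_2^2$ whose signs are uncontrolled by the data; the diagonal term $(\kappa_1^2+\kappa_2^2)(-K^N+f_2^2)$ likewise has no a~priori sign since $K^N=3\sigma^2+(a_3^3)^2R+\varepsilon^2$ and $R$ can be of either sign. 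There is, in short, no evidence the cancellation you hope for occurs. A secondary issue is that the frame of Theorem~\ref{Cla} is constructed via $e_1=e_3\times E_3$ and is only defined where $E_3$ is not vertical, so integrating $\kappa_1\Delta\kappa_1+\kappa_2\Delta\kappa_2$ over the whole sphere requires an argument you have not supplied.

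In summary: the missing idea is precisely the one the paper supplies---first showing $\kappa_2=0$ pointwise from the curvature constraints alone, then reducing to a one-variable polynomial elimination in $\sigma$. Your integral approach, as stated, is a hope rather than a proof.
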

\begin{proof}
 Let  $\nabla$  denote the Levi-Civita connection on $S^3_{\varepsilon}$  and by $e_i=\sum\limits_{j=1}^3a_i^jE_j,\;i=1,2,3$.
 To obtain the theorem, by Remark \ref {r1}, we just need to consider the case $R=4-4\varepsilon^2\neq0$ and $a_3^{3}\neq\pm1$. Therefore, by Theorem \ref{Cla} and Remark \ref {r2}, we can take an adapted frame as the form $\{e_1=a_1^1E_1+a_1^2E_2,\; e_2,\;e_3\}$  to the Riemannian submersion $\pi$ with $e_3$ being vertical, and together with a result in \cite{WO}, we have
\begin{equation}\label{z1}
a_2^3,\;a_3^3\neq0, \pm1,\;a_1^3=f_1=f_3=0, \;e_3(f_1)=e_3(f_2)=0,\;f_2\neq0.
\end{equation}
We will show that the above case can not happen by the following two steps:\\
 {\bf Step 1}: Show that $e_2(f_2)=e_2(\kappa_1)=e_3(\kappa_1)=e_2(\sigma)=e_3(\sigma)=\kappa_2=0,\;\kappa_1\neq0,\;{\rm and}\;\sigma\neq0,\pm\varepsilon.$\\

 {\bf Firstly}, show that
 $\sigma\neq0$. Clearly, if $\sigma=0$, we use
 the 1st equation of (\ref{RC}) to have  $a_2^3a_3^3=0$ contradicting (\ref{z1}). This leads to $\sigma\neq0$.\\

  {\bf Secondly}, show that $\kappa_2=e_2(\kappa_1)=e_2(\sigma)=e_3(\sigma)=0$.\\
   A straightforward calculation using the 5th , the 6th equation of  (\ref{thb2}), (\ref{GCB0}) and  (\ref{z1}) and applying $e_3$ to both sides of the 4th
 equation of (\ref{RC}) and the 8th equation of  (\ref{thb2}) separately,  we get
\begin{equation}\label{th15}
\begin{array}{lll}
e_3(\sigma)=-\frac{1}{3\sigma}\kappa_2a_2^3a_3^3R,\\
\end{array}
\end{equation}
and
\begin{equation}\label{th16}
\begin{array}{lll}
-f_2\kappa_2a_3^3=e_3(\sigma)a_3^3+(\sigma+\varepsilon)\kappa_2a_2^3.\\
\end{array}
\end{equation}
We substitute (\ref{th15}) into (\ref{th16}) and  simplify
the resulting equation to obtain
\begin{equation}\label{th17}
\begin{array}{lll}
\kappa_2\left(3f_2\sigma a_3^3-a_2^3(a_3^3)^2R+3\sigma(\sigma+\varepsilon)a_2^3\right)=0,\\
\end{array}
\end{equation}
which means $\kappa_2=0$,\;or,
\begin{equation}\label{th19}
\begin{array}{lll}
3f_2\sigma a_3^3-a_2^3(a_3^3)^2R+3\sigma(\sigma+\varepsilon)a_2^3=0.
\end{array}
\end{equation}
Together with (\ref{z1}), substituting the 8th equation of (\ref{thb2})
 into the above equation and  simplifying
the resulting equation,  we have
\begin{equation}\label{th20}
\begin{array}{lll}
3\sigma(\sigma+\varepsilon)=(a_2^3a_3^3)^2R.
\end{array}
\end{equation}
Noting that $f_3=0$,  we rewritten the 8th equation of (\ref{thb2}) as
\begin{equation}\label{f3}
\begin{array}{lll}
\kappa_1a_3^3=(\sigma-\varepsilon)a_2^3.
\end{array}
\end{equation}

Using the 3rd, the 4th equation of (\ref{thb2}) and (\ref{z1}) and applying
$e_2$ to both sides of the above equation gives
\begin{equation}\label{th21}
\begin{array}{lll}
3(2\sigma+\varepsilon)e_2(\sigma)=0.
\end{array}
\end{equation}
This  implies $e_2(\sigma)=0.$\\

From these and using the 5th equation of (\ref{RC}) and (\ref{th15}), we have
$\kappa_2=0$ and hence $e_3(\sigma)=e_2(\kappa_1)=0$.\\

{\bf Thirdly}, show that $e_2(f_2)=e_3(\kappa_1)=0$.\\

Using the 3rd equation of  (\ref{thb2}), (\ref{z1}) and  applying $e_2$ to
both sides of the 2nd equation of (\ref{RC}) and the 8th
equation of (\ref{thb2}) separately, together with $e_2(\kappa_1)=e_2(\sigma)=0$, we get

\begin{equation}\label{th24}
\begin{array}{lll}
e_2e_1(\kappa_1)=0,\;e_2(f_2)=0.
\end{array}
\end{equation}
From these, we get $e_1e_2(\kappa_1)-e_2e_1(\kappa_1)=0$,  which,
together with $e_1e_2(\kappa_1)-e_2e_1(\kappa_1)= [e_1,
e_2](\kappa_1)=-2\sigma e_3(\kappa_1)$,  implies $e_3(\kappa_1)=0$.\\

{\bf Finally}, show that $\sigma\neq\pm\varepsilon$ and $\kappa_1\neq0$. \\
Using (\ref{z1}) and $\kappa_2=0$, the 7th equation
of (\ref{RC}) becomes
\begin{equation}\label{th25}
\begin{array}{lll}
\kappa_1f_2=\sigma^2-\varepsilon^2.
\end{array}
\end{equation}
Since $f_2\neq0$, one sees from  (\ref{th25}) that $\sigma=\pm\varepsilon$ is equivalent to $\kappa_1=0$.  Obviously, if $\sigma=\pm \varepsilon$ and hence $\kappa_1=0$,  by the 2nd equation of (\ref{RC}) and (\ref{z1}), one sees  $R=0$, a contradiction. Therefore, we get $\sigma\neq\pm\varepsilon$ and $\kappa_1\neq0$.\\

 {\bf Step 2}: show that $\sigma =-\varepsilon$, a contradiction.\\
For $f_1=\kappa_2=0$ and $K^N=e_1(f_2)-f_2^2$, \;it is easy to deduce that biharmnic equation (\ref{lem1}) reduces to
\begin{equation}\label{thb26}
\begin{array}{lll}
\Delta\kappa_1-\kappa_1\{-e_1(f_2)+2f_2^2\}=0.
\end{array}
\end{equation}
 Together with (\ref{z1}), using the 1st equation of (\ref{thb2}) and applying $e_1$ to both sides of the 2nd
equation of (\ref{RC}) and (\ref{th25}) separately, we obtain
\begin{equation}\label{th26}
\begin{cases}
e_1e_1(\kappa_1)=2\kappa_1e_1(\kappa_1)-2\sigma e_1(\sigma)-2(\sigma+\varepsilon)a_2^2a_3^3R,\\
\kappa_1e_1(f_2)+f_2e_1(\kappa_1)=2\sigma e_1(\sigma).
\end{cases}
\end{equation}

We substitute the 1st, the 2nd equation of (\ref{RC}),
the 8th equation of (\ref{thb2}), (\ref{th26}), the results of\; Step 1 into biharmnic equation (\ref{thb26}), together with
$f_1=\kappa_2=0$ and (\ref{z1}), to have
\begin{equation}\label{ths27}
\begin{array}{lll}
\kappa_1^3-3\kappa_1^2f_2+\kappa_1(a_2^3)^2R-4(\sigma
+\varepsilon)a_2^3a_3^3R=0.
\end{array}
\end{equation}
 Multiplying  $a_3^3$ to  both  sides of the
above equation  and using the fact that
$\kappa_1a_3^3=(\sigma-\varepsilon)a_2^3$,\;
$\kappa_1f_2=\sigma^2-\varepsilon^2$\; and $(a_2^3)^2+(a_3^3)^2=1$ and  simplifying the resulting
equation we get
\begin{equation}\label{ths28}
\begin{array}{lll}
\kappa_1^2=\frac{5\sigma+3\varepsilon}{\sigma-\varepsilon}R(a_3^3)^2+3\sigma^2-3\varepsilon^2-R.
\end{array}
\end{equation}
We substitute $\kappa_1a_3^3=(\sigma-\varepsilon)a_2^3$  into the
above equation and simplify the resulting equation to obtain
\begin{equation}\label{ths29}
\begin{array}{lll}
\frac{5\sigma+3\varepsilon}{\sigma-\varepsilon}R(a_3^3)^4+(4\sigma^2-2\sigma
\varepsilon-2\varepsilon^2+R)(a_3^3)^2-(\sigma-\varepsilon)^2=0.
\end{array}
\end{equation}
Applying $e_1$ to both sides of (\ref{ths28}) and using the 1st
equation and the 2nd equation of (\ref{RC})  to simplify the
resulting equation we have
\begin{equation}\label{ths30}
\begin{array}{lll}
\kappa_1^3=\kappa_1(7\sigma^2-\varepsilon^2)-\kappa_1(a_2^3)^2R-\frac{8\kappa_1\sigma
\varepsilon R}{(\sigma-\varepsilon)^2}(a_3^3)^2-\frac{4\varepsilon
R^2}{(\sigma-\varepsilon)^2}a_2^3(a_3^3)^3+
\frac{(5\sigma+3\varepsilon)(\sigma+\varepsilon)}{\sigma-\varepsilon}Ra_2^3a_3^3+3\sigma a_2^3a_3^3R.
\end{array}
\end{equation}
We multiply  $a_3^3$ to  both  sides of the above equation and
use the fact that $\kappa_1a_3^3=(\sigma-\varepsilon)a_2^3$  and $(a_2^3)^2+(a_3^3)^2=1$  and
simplify the resulting equation to get
\begin{equation}\label{ths31}
\begin{array}{lll}
\kappa_1^2=-\frac{4\varepsilon
R^2}{(\sigma-\varepsilon)^3}(a_3^3)^4+\frac{(9\sigma^2-5\sigma\varepsilon+
4\varepsilon^2)R}{(\sigma-\varepsilon)^2}(a_3^3)^2+7\sigma^2-\varepsilon^2-R.
\end{array}
\end{equation}
Comparing (\ref{ths28}) with the above equation and
simplifying the resulting equation we have
\begin{equation}\label{ths32}
\begin{array}{lll}
-\frac{4\varepsilon R^2}{(\sigma-\varepsilon)^3}(a_3^3)^4+\frac{(4\sigma^2-3\sigma\varepsilon+
7\varepsilon^2)R}{(\sigma-\varepsilon)^2}(a_3^3)^2+2(2\sigma^2+\varepsilon^2)=0.
\end{array}
\end{equation}
Eq.(\ref{ths32}) multiplied by
$ (5\sigma+3\varepsilon)(\sigma-\varepsilon)^2$ minus Eq.
(\ref{ths29}) multiplied by $(-4\varepsilon R)$, a straightforward  computation yields
\begin{equation}\label{ths33}
\begin{array}{lll}
\left((5\sigma+3\varepsilon)(4\sigma^2-3\sigma\varepsilon+7\varepsilon^2+4\varepsilon(4\sigma^2-2\sigma
\varepsilon-2\varepsilon^2+R)\right)R(a_3^3)^2\\=2(\sigma-\varepsilon)^2\left(-(5\sigma+3\varepsilon)(2\sigma^2+\varepsilon^2)+2\varepsilon R\right).
\end{array}
\end{equation}
On the other hand, Eq.(\ref{ths32})
multiplied by $ (\sigma-\varepsilon)^2$ plus Eq. (\ref{ths29})
multiplied by $2(2\sigma^2+\varepsilon^2)$, and  we then simplify the resulting equation to obtain
\begin{equation}\label{ths34}
\begin{array}{lll}
(\sigma-\varepsilon)\left((4\sigma^2-3\sigma\varepsilon+7\varepsilon^2)R-2(4\sigma^2-2\sigma
\varepsilon-2\varepsilon^2+R)(2\sigma^2+\varepsilon^2)\right)\\=2\left(-(5\sigma+3\varepsilon)(\sigma^2+\varepsilon^2)+2\varepsilon R\right)R(a_3^3)^2.
\end{array}
\end{equation}
A direct computation using Eq. (\ref{ths33}) and Eq.(\ref{ths34}) to
simplify the resulting equation we get
\begin{equation}\label{ths35}
\begin{array}{lll}
\left((5\sigma+3\varepsilon)(4\sigma^2-3\sigma\varepsilon+7\varepsilon^2)+2l(4\sigma^2-2\sigma
\varepsilon-2\varepsilon^2+R)\right)\\
\times \left((4\sigma^2-3\sigma\varepsilon+7\varepsilon^2)R+2(4\sigma^2+2\sigma
\varepsilon-2\varepsilon^2+R)(2\sigma^2+\varepsilon^2)\right)\\
=4(\sigma-\varepsilon)\left(-(5\sigma+3\varepsilon)(2\sigma^2+\varepsilon^2)+2\varepsilon R\right)^2
\end{array}
\end{equation}
A further computation, one finds that the above equation is a polynomial system in $\sigma$ of degree seven
with constant coefficients  as
\begin{equation}\label{ths36}
\begin{array}{lll}
80\sigma^7 +P(\sigma)=0,
\end{array}
\end{equation}
where $P(\sigma)$ denotes a polynomial in $\sigma$ of not more than
6 with constant coefficients. This implies that $\sigma$ have to be constant. Using Eq.(\ref{ths32}) and Eq.(\ref{ths31}), we see that both $a_3^3$  and $\kappa_1$ are constants. Then $a_2^3=\pm\sqrt{1-(a_3^3)^2}$ has to be a constant since $a_1^3=0$. From these and using the 1st equation of
(\ref{thb2}), we obtain $\sigma=-\varepsilon$, which is a
contradiction.\\
Summarizing the results proved above, the theorem follows.
\end{proof}

\end{document}